\newtheorem{theorem}{Theorem}[section]
\newtheorem{lemma}[theorem]{Lemma}
\theoremstyle{definition}
\newtheorem{definition}[theorem]{Definition}
\newtheorem{remark}[theorem]{Remark}
\numberwithin{equation}{section}
\numberwithin{equation}{section}
\newsavebox{\savepar}
\begin{document}
	
	\title{Least energy sign-changing solution of fractional $p$-Laplacian problems involving singularities}
	\author{\small Sekhar Ghosh\footnote{sekharghosh1234@gmail.com}$^{~,1}$, Kamel Saoudi\footnote{kmsaoudi@iau.edu.sa (Corresponding author)},$^{\,,2}$ Mouna Kratou\footnote{mmkratou@iau.edu.sa},$^{\,,2}$ \& Debajyoti Choudhuri\footnote{dc.iit12@gmail.com}$^{~,1}$\\
		\small{$^1$\it Department of Mathematics, National Institute of Technology Rourkela, India}\\
		\small{$^2$\it Basic and Applied Scientifc Research Center, Imam Abdulrahman Bin Faisal University,,}\\ \small{\it P.O. Box 1982, 31441, Dammam, Saudi Arabia}}
	\date{}
	\maketitle
	\vspace{-1cm}
	\begin{abstract}
		\noindent In this paper we study the existence of a least energy sign-changing solution to a nonlocal elliptic PDE involving singularity by using the Nehari manifold	method, the constraint variational method and Brouwer degree theory.\\
		{\bf keywords}: Sign-changing solutions; Fractional $p$-Laplacian;
		Nehari manifold.\\
		{\bf AMS classification}:~35J60, 35J92, 35R11, 47J30.
	\end{abstract}

	\section{Introduction and Main results}\label{intro}
	
	In this paper we consider the following fractional $p$-Laplacian problem involving singularity and a power nonlinearity.
	\begin{align}\label{1.1}
	(P)~~~~~~~~~
	\left\{ \begin{aligned} (-\Delta_p)^\alpha u&= \lambda g(u)+
	f(x,u)~\mbox{in}~\Omega,\\
	u&= 0 ~\mbox{in}~\mathbb{R}^N\setminus \Omega,
	\end{aligned}\right.
	\end{align}
	where, $\Omega\subset\mathbb{R}^{N}$ is a bounded domain with smooth
	boundary $\partial \Omega$, $\lambda>0$, $p\in(1,+\infty)$, $\alpha\in(0,1)$,
	$N>p\alpha $, $0<\delta<1,$ $f:\overline{\Omega}\times\mathbb{R}\to \mathbb{R}$ is
	continuous,  $g:\mathbb{R}^+\to\mathbb{R}^+$ is continuous, nonincreasing on $(0,+\infty)$ such that $c_1=\liminf_{t\to 0^+}g(t)t^{\delta}\leq\limsup_{t\to 0^+}g(t)t^{\delta}=c_2$, 
	for some $c_1,c_2>0$ and the fractional $p$-Laplacian operator, $(-\Delta)_p^\alpha$ is defined as,
	\begin{equation*}\label{operator}
	(-\Delta_p)^\alpha
	u(x)=C_{N,\alpha}P.V.\int_{\mathbb{R}^N}\frac{|u(x)-u(y)|^{p-2}(u(x)-u(y))}{|x-y|^{N+p\alpha
	}}dy,~~ x\in \mathbb{R}^N,
	\end{equation*}
	where $C_{N,\alpha}$ is a normalizing constant.
	
	\noindent One of the classical topic in the analysis of PDEs is the study of existence and multiplicity of nonnegative solutions for both the $p$-Laplacian and the fractional $p$-Laplacian operator involving concave-convex nonlinearity and singularity-power nonlinearity. In the recent past there has been considerable interest in studying the following general fractional $p$-Laplacian problem involving singularity.
	\begin{eqnarray}\label{refer nonlocal}
	(-\Delta_p)^s u&=& \frac{\lambda a(x)}{u^\gamma}+Mf(x,u)~\text{in}~\Omega,\nonumber\\
	u&=&0~\text{in}~\mathbb{R}^N\setminus\Omega,\\
	u&>& 0~\text{in}~\Omega\nonumber,
	\end{eqnarray}
	where $N>ps$, $M\geq0$, $a:\Omega\rightarrow\mathbb{R}$ is a nonnegative bounded function.
	Ghanmi \& Saoudi \cite{ghanmi2016multiplicity} guaranteed the existence of multiple weak solutions to the problem (\ref{refer nonlocal}), for $0<\gamma<1$ and $1<p-1<q\leq p_s^*$ by using the Nehari manifold method. Recently, multiplicity and H\'{o}lder regularity of solutions to the problem \eqref{refer nonlocal} has been studied by Saoudi et al. \cite{saoudi2018multiplicity}. On the other hand, for $p = 2$, the problems of the type \eqref{refer nonlocal}, have been investigated by many
	researchers. For references see \cite{mukherjee2016dirichlet, saoudi2017critical, saoudi2018multiplicity} and the references therein.\\ 
	The existence of a sign-changing solution of nonlinear elliptic PDEs with power nonlinearities has been studied extensively for the $p$-Laplacian operator as well as the fractional $p$-Laplacian operator. We refer the reader to see \cite{AS,BW,BSRT,chang, GTZ, LLW2, WZ1, Wei} and the references therein. Consider the nonlocal problem 
	\begin{eqnarray}\label{1.3} \left\{
	\begin{array}{ll}
	(-\Delta)_p^\alpha u= f(x,u)~~& \mbox{in}~ \Omega,\\
	u=0~~&\mbox{on}~\mathbb{R}^N\setminus\partial \Omega,
	\end{array}
	\right.
	\end{eqnarray}
	For $p=2$, the authors in \cite{CW}, has studied the problem \eqref{1.3}, 
	where the fractional Laplacian operator is defined through spectral decomposition to obtain the sign-changing solution. The method of harmonic extension was introduced by Caffarelli and Silvestre \cite{CaSi} to
	transform the nonlocal problem in $\Omega$ to a local problem in the
	half cylinder $\Omega\times (0, +\infty)$, by using an equivalent definition of the fraction Laplacian operator \cite{BCde}.\\
	For $p\in(1, \infty)$, the problem studied by Chang et al. \cite{chang}, where the authors have guaranteed the existence of a sign-changing solutions by using Nehari manifold method.
	Recently, the study of the nonlocal problems with singularity has drawn interest to many researchers. For recent studies on nonlocal PDEs involving singularities, we refer \cite{crandall1977dirichlet, dhanya2012global, giacomoni2009multiplicity, giacomoni2007multiplicity, haitao2003multiplicity, hirano2004existence, saoudi2018multiplicity} and the references therein.\\
	The main goal of this article is to obtain a sign-changing
	solutions to the nonlocal problem \eqref{1.1} involving singularity. For $p\neq 2$, the harmonic extension method can non be applied on an equivalent definition of$(-\Delta)_p^\alpha$. On a similar note, we can not have the decomposition $\Phi(u)=\Phi(u^+)+\Phi(u^-)$ for $u=u^++u^-$, where $\Phi$ is the corresponding energy functional to the problem \eqref{1.1}.
	Therefore, by using the method as in \cite{CW}, one can not guarantee the existence of a sign-changing solution.\\
	Therefore, we will apply the Nehari manifold method combining with a constrained variational method and Brouwer degree theory to obtain a least energy sign-changing solution.\\
	We first recall some preliminary results on the fractional Sobolev space \cite{Ad, DPV}. Let $\Omega\subset\mathbb{R}^N$ is a bounded domain with smooth boundary and $\alpha\in(0,1)$.
	We denote the fractional Sobolev space by $W^{\alpha,p}(\Omega)$ equipped with the norm
	\begin{equation*}
	\|u\|_{W^{\alpha, p}(\Omega)}=\|u\|_{L^p(\Omega)}+(\int_{\Omega\times\Omega}\frac{|u(x)-u(y)|^p}{|x-y|^{N+\alpha p}}dxdy)^{\frac{1}{p}}.
	\end{equation*}
	We set, $Q=\mathbb{R}^{2N}\setminus((\mathbb{R}^N\setminus\Omega)\times(\mathbb{R}^N\setminus\Omega))$, then the space $(X, \|..\|_X)$ is defined by
	\begin{eqnarray}
	X&=&\left\{u:\mathbb{R^N}\rightarrow\mathbb{R}~\text{is measurable}, u|_{\Omega}\in L^p(\Omega) ~\text{and}~\frac{|u(x)-u(y)|}{|x-y|^{\frac{N+p\alpha}{p}}}\in L^{p}(Q)\right\}\nonumber
	\end{eqnarray}
	equipped with the Gagliardo norm 
	\begin{eqnarray}
	\|u\|_X&=&\|u\|_{p}+\left(\int_{Q}\frac{|u(x)-u(y)|^p}{|x-y|^{N+p\alpha}}dxdy\right)^{\frac{1}{p}}.\nonumber
	\end{eqnarray}
	Here $\|u\|_{p}$ refers to the $L^p$-norm of $u$. We then define the space 
	\begin{eqnarray}
	X_0&=&\left\{u\in X: u=0 ~\text{a.e. in}~ \mathbb{R}^N\setminus\Omega\right\}\nonumber
	\end{eqnarray}
	equipped with the norm
	\begin{eqnarray}
	\|u\|&=&\left(\int_{Q}\frac{|u(x)-u(y)|^p}{|x-y|^{N+p\alpha}}dxdy\right)^{\frac{1}{p}}.\nonumber
	\end{eqnarray}
	\noindent The best Sobolev constant is defined as 
	\begin{equation}\label{sobolev const}
	S=\underset{u\in X_0\setminus\{0\}}{\inf}\cfrac{\int_{Q}\cfrac{|u(x)-u(y)|^p}{|x-y|^{N+p\alpha}}dxdy}{\left(\int_\Omega|u|^{p_{\alpha}^*}dx\right)^{\frac{p}{p_{\alpha}^*}}}
	\end{equation}
	For $p > 1$, the space $X_0$ is a uniformly convex Banach space \cite{servadei2012mountain, servadei2013variational} and the embedding $X_0 \hookrightarrow L^{q}(\Omega) $ is compact for $q\in[1, p_{\alpha}^*)$ and is continuous for $q\in[1, p_{\alpha}^*]$, where $p_{\alpha}^*$ is the Sobolev conjugate of $p$, defined as $p_{\alpha}^*=\frac{Np}{N-p\alpha}$.\\ 
	Henceforth, we have the following assumptions on $f$ and $g$.
		\begin{description}
			\item[($f_1$)]$f\in C(\overline{\Omega}\times \mathbb{R})$, $\lim\limits_{|u|\to 0}\frac{f(x,u)}{|u|^{p-2}u}=0$, uniformly in $x$;
			\item[($f_2$)]there exist constants $C_0>0$ and $q\in (p, p_{\alpha}^*)$ with $p_{\alpha}^*=\frac{pN}{N-p\alpha}$ such that
			\begin{equation*}
				|f(x,u)|\le C_0(1+|u|^{q-1}),~ \forall
				u\in \mathbb{R}, \forall x\in \Omega;
			\end{equation*}
			\item[($f_3$)]there exist $\mu>p$ and $M_0>0$ such that $f(x,u)u\ge \mu F(x,u)>0$ for $|u|\ge M_0$, uniformly in
			$x$, where $F(x,u)\doteq \int_0^uf(x,\tau)d\tau$;
			\item[($f_4$)]$\lim\limits_{|u|\rightarrow +\infty}\frac{f(x,u)}{|u|^{p-2}u}=+\infty$ uniformly in $x$;
			\item[($f_5$)]$\frac{f(x,u)}{|u|^{p-2}u}$ is strictly increasing on $(0,+\infty)$ and strictly decreasing on $(-\infty,0)$, uniformly in $x$.
			\item[($g_1$)] $g:\,\mathbb{R}\setminus\{0\}\to\mathbb{R}^+$ continuous on $\mathbb{R}\setminus\{0\}$,  $g$ is nondecreasing on $(-\infty, 0)$ and $g$ is nonincreasing on $(0,+\infty)$,
			\item[($g_2$)] $c_1\leq \liminf_{t\to 0^+}g(t)t^{\delta}\leq\limsup_{t\to 0^+}g(t)t^{\delta}=c_2$ for some $c_1,c_2>0$ and\\ $d_1\leq \liminf_{t\to 0^-}g(t)t^{\delta}\leq\limsup_{t\to 0^-}g(t)t^{\delta}=d_2$ for some $d_1,d_2>0$.
		\end{description}
		\begin{remark}
			\begin{enumerate}
				\item By $(f_5)$ it follows that 
				\begin{equation*}
					t^2f'(t)-(p-1)f(t)t>0, \forall |t|>0.
				\end{equation*}
				\item From { $(g_2)$}, $g$ is singular at the origin and $\lim_{t\to 0^{\pm}}g(t)=\pm\infty$.
			\end{enumerate}
		\end{remark}
We now define a weak solution to the problem defined in (\ref{1.1}).
\begin{definition}\label{defn weak}
	A function $u\in X_0$ is a weak solution to the problem (\ref{1.1}), if 
	\begin{align*}
 &\int_{Q}\frac{|u(x)-u(y)|^{p-2}(u(x)-u(y))(\phi(x)-\phi(y))}{|x-y|^{N+p\alpha}}dxdy-\lambda\int_{\Omega}g(u)\phi-\int_{\Omega}f(x, u)\phi=0
	\end{align*}
\end{definition}
\noindent for each $\phi\in X_0.$ The corresponding Euler-Lagrange energy functional is
\begin{equation*}
I_{\lambda}(u)=\frac{1}{p}\int_{\Omega}\frac{|u(x)-u(y)|^{p}}{|x-y|^{N+p\alpha}}dxdy-\lambda\int_{\Omega}G_\lambda(u)dx-\int_{\Omega}F_\lambda(x, u)dx.
\end{equation*}
 It is easy to observe that $I_{\lambda}$ is not $C^1$ due the presence of the singular term in it but $I_{\lambda}$ is continuous and G\^{a}teaux differentiable (see Corollary 6.3 of \cite{saoudi2018multiplicity}). Therefore, we can not apply Nehari manifold method corresponding to the functional $I_{\lambda}$. Hence, we will establish the existence of a sign-changing solution to the problem \eqref{1.1} by obtaining a critical point to a $C^1$ cutoff functional. We define,
\begin{equation*}
	\Lambda=\inf\{\lambda>0:~\text{The problem (\ref{1.1}) has no weak solution}\}.
\end{equation*}
We now state the existence of a unique solution due to \cite{canino2017nonlocal} to the following problem.
\begin{align}\label{squasinna}
(-\Delta_p)^s w&=\lambda g(w)~\text{in}~\Omega,\nonumber\\
w&>0~\text{in}~\Omega,\nonumber\\
w&=0~\text{in}~\mathbb{R}^N\setminus\Omega.
\end{align}
\begin{lemma}
	Assume $0<\delta<1$ and $\lambda>0$. Then the problem \eqref{squasinna} has a unique solution, $\underline{u}_{\lambda}\in W_0^{s, p}(\Omega)$, such that for every $K\subset\subset\Omega$, $ess.\underset{K}{\inf}\,\underline{u}_\lambda>0$.
\end{lemma}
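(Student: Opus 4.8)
This is precisely the result of \cite{canino2017nonlocal}, and the plan is to reproduce that argument: I would remove the singularity by approximation, produce a solution of \eqref{squasinna} as a monotone limit of variationally obtained solutions of regularized problems, and then deduce uniqueness and the interior positivity from the facts that $t\mapsto\lambda g(t)$ is nonincreasing on $(0,+\infty)$ and that $(-\Delta_p)^s$ is strictly monotone. Throughout, $X_0=W_0^{s,p}(\Omega)$ (with $s$ in the role of $\alpha$).

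\emph{Step 1: regularization and a priori estimates.} For $n\in\mathbb{N}$ set $g_n(t)=g\!\left(t+\tfrac1n\right)$, $t\geq0$; since $g$ is nonincreasing, $0\leq g_n\leq g(\tfrac1n)$, so $g_n$ is bounded and continuous, and with $G_n(t)=\int_0^t g_n$ the functional
\begin{equation*}
J_n(w)=\frac1p\,\|w\|^p-\lambda\int_\Omega G_n(w^+)\,dx,\qquad w\in X_0,
\end{equation*}
is of class $C^1$, coercive (as $G_n$ grows at most linearly and $p>1$) and weakly lower semicontinuous, hence attains its infimum at some $w_n$; this $w_n$ is $\not\equiv0$ (because $J_n$ is negative on small positive multiples of a nonnegative bump) and nonnegative (replace $w_n$ by $w_n^+$, using $|w^+(x)-w^+(y)|\leq|w(x)-w(y)|$), so $(-\Delta_p)^s w_n=\lambda g_n(w_n)$ weakly and, by the strong maximum principle and the interior weak Harnack inequality for $(-\Delta_p)^s$, $w_n>0$ in $\Omega$ with $\operatorname{ess\,inf}_K w_n>0$ for every $K\subset\subset\Omega$. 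If $m\leq n$ then $g_m\leq g_n$, so testing the difference of the two equations with $(w_m-w_n)^+$ and using the monotonicity of both $(-\Delta_p)^s$ and $g_n$ gives $w_m\leq w_n$; thus $(w_n)$ is nondecreasing. Testing the $n$-th equation with $w_n$, and using that $(g_2)$ and the monotonicity of $g$ force $g(t)t\leq C(1+t)$ on $(0,+\infty)$,
\begin{equation*}
\|w_n\|^p=\lambda\int_\Omega g\!\left(w_n+\tfrac1n\right)w_n\,dx\leq C\lambda\int_\Omega\!\left(1+w_n+\tfrac1n\right)dx\leq C'\bigl(1+\|w_n\|\bigr)
\end{equation*}
by $X_0\hookrightarrow L^1(\Omega)$, so $\sup_n\|w_n\|<\infty$ since $p>1$.

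\emph{Step 2: the monotone limit.} Consequently $w_n\uparrow\underline u_\lambda$ a.e.\ in $\Omega$ and $w_n\rightharpoonup\underline u_\lambda$ in $X_0$ for some $\underline u_\lambda$ with $\underline u_\lambda\geq w_1$, so $\operatorname{ess\,inf}_K\underline u_\lambda>0$ on every $K\subset\subset\Omega$. Since $0\leq w_n\leq\underline u_\lambda$ and $g_n(w_n)\geq0$,
\begin{equation*}
\limsup_{n\to\infty}\langle(-\Delta_p)^s w_n,\,w_n-\underline u_\lambda\rangle=\limsup_{n\to\infty}\lambda\int_\Omega g_n(w_n)(w_n-\underline u_\lambda)\,dx\leq0,
\end{equation*}
so the $(S_+)$ property of $(-\Delta_p)^s$ on $X_0$ yields $w_n\to\underline u_\lambda$ strongly; the nonlocal term then converges against every test function, and for $\phi$ supported in a compact $K\subset\subset\Omega$ one has $g_n(w_n)\leq g(\operatorname{ess\,inf}_K w_1)<\infty$ on $K$, so $\int_\Omega g_n(w_n)\phi\to\int_\Omega g(\underline u_\lambda)\phi$ by dominated convergence. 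Extending this limit to the full admissible test class (as in \cite{canino2017nonlocal}) shows that $\underline u_\lambda$ solves \eqref{squasinna}.

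\emph{Step 3: uniqueness, and the main difficulty.} For two solutions $u,v\in W_0^{s,p}(\Omega)$ I would use $(u-v)^+$ as a test function in both equations and subtract, obtaining
\begin{equation*}
\langle(-\Delta_p)^s u-(-\Delta_p)^s v,\,(u-v)^+\rangle=\lambda\int_{\{u>v\}}\!\bigl(g(u)-g(v)\bigr)(u-v)\,dx\leq0,
\end{equation*}
the inequality because $g$ is nonincreasing on $(0,+\infty)$; on the other hand the left-hand side is $\geq0$, and vanishes only when $(u-v)^+\equiv0$, by the elementary monotonicity inequality for $\xi\mapsto|\xi|^{p-2}\xi$. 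Hence $u\leq v$, and by symmetry $u=v$. The genuinely delicate point, entering both the limit passage in Step 2 and the choice of test function above, is that the relevant test functions do not vanish near $\partial\Omega$, so one must control the integrability of $g(\underline u_\lambda)\phi$ there; this relies on the sharp boundary behaviour $\underline u_\lambda\sim\operatorname{dist}(\cdot,\partial\Omega)^s$, which together with $\delta s<1$ places $g(\underline u_\lambda)$ in $L^1(\Omega)$ — exactly the analysis carried out in \cite{canino2017nonlocal}. For compactly supported test functions the crude lower bound $\underline u_\lambda\geq c\,e$ (with $e$ the $(-\Delta_p)^s$-torsion function) already suffices.
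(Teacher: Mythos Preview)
The paper does not give its own proof of this lemma: it is stated immediately after the sentence ``We now state the existence of a unique solution due to \cite{canino2017nonlocal} to the following problem'' and is used as a black box. Your proposal is a faithful outline of precisely the argument in \cite{canino2017nonlocal} --- approximation by the shifted problems $g_n(t)=g(t+1/n)$, monotone limit of the minimizers $w_n$, a priori bound from $g(t)t\le C(1+t)$, strong convergence via the $(S_+)$ property, and uniqueness by testing with $(u-v)^+$ --- so there is nothing to compare against in the present paper.

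One small caution on the sketch itself: in Step~1, uniqueness of the minimizer $w_n$ of the regularized functional is not automatic, so the comparison ``$m\le n\Rightarrow w_m\le w_n$'' should be phrased as a comparison between \emph{any} weak solutions of the two regularized equations (which is what your test-function argument actually gives), rather than between minimizers; this is how \cite{canino2017nonlocal} proceeds. Also, as you correctly flag, the genuinely nontrivial part is justifying the test functions that do not vanish near $\partial\Omega$ (both in the limit passage and in the uniqueness step); the boundary behaviour $\underline u_\lambda\sim\mathrm{dist}(\cdot,\partial\Omega)^s$ and the condition $\delta<1$ are exactly what make $g(\underline u_\lambda)\in L^1(\Omega)$ and allow the argument to close, and you would need to invoke or reproduce that analysis rather than leave it as a parenthetical.
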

\noindent Define,
\[   
\overline{g}(t) = 
\begin{cases}
g(t), &~\text{if}~t>\underline{u}_{\lambda}\\
g(\underline{u}_{\lambda}),&~\text{if}~t\leq \underline{u}_{\lambda}
\end{cases}\] and
\[   
\overline{f}(x,t) = 
\begin{cases}
f(x, t), &~\text{if}~t>\underline{u}_{\lambda}\\
f(x, \underline{u}_{\lambda}),&~\text{if}~t\leq \underline{u}_{\lambda}
\end{cases}\]
where, $\underline{u}_{\lambda}$ is the solution to \eqref{squasinna}. Let $G(s)=\int_{0}^{s}\overline{g}(t)dt$ and $F(x,s)=\int_{0}^{s}\overline{f}(x,t)dt$. 
We now define the energy functional $\Phi:X_0\to \mathbb{R}$ by
	\begin{equation*}
		\Phi(u)=\frac{1}{p}\|u\|^p-\lambda\int_{\Omega}G(u)dx-\int_{\Omega}F(x,u)dx.
	\end{equation*}
	Under the assumptions $(f_1)-(f_5)$ and $(g_1)-(g_2)$, the functional $\Phi$ is $C^1$ on $X_0$ (see Lemma 6.4 in \cite{saoudi2018multiplicity}) and weakly lower semicontinuous	by a standard arguments. Define
	\begin{align*}
		\zeta(u)&=\langle\Phi'(u),u\rangle_{X_0^*,X_0}=\|u\|^{p}-\lambda\int_{\Omega}g(u)udx-\int_{\Omega}f(x,u)udx, \forall u\in X_0,\\
		\mathcal{N}&\doteq\{u\in X_0\setminus\{0\}: \zeta(u)=0\},
	\end{align*}
	where $X_0^*$ is the dual space of $X_0$. For simplicity, we will denote $\langle\cdot, \cdot\rangle_{X_0^*,X_0}$ by $\langle\cdot, \cdot\rangle$. Clearly, every nontrivial solutions of (\ref{1.1}) belongs to $\mathcal{N}$.\\
	Define the set of sign-changing solutions of (\ref{1.1}) as
	\begin{equation*}\label{8-27-1}
		\mathcal{M}=\{u\in X_0: u^{\pm}\neq0, \langle\Phi'(u), u^+\rangle=\langle\Phi'(u),	u^-\rangle=0\},
	\end{equation*}
	where $u^+(x)\doteq\max\{u(x),0\}$, $u^-(x)\doteq\min\{u(x),0\}$. 
	We set $m_{\alpha}\doteq \inf\limits_{u\in \mathcal{M}}\Phi(u)$ and
	$c_{\alpha}\doteq \inf\limits_{u\in \mathcal{N}}\Phi(u)$.
	The main result proved in this article is the following.		
	\begin{theorem}\label{Thm1.1}
		Suppose that the assumptions $(f_1)-(f_5)$ and $(g_1)-(g_2)$ holds.
		Then there exists a $\Lambda>0$, such that for $\lambda\in(0, \Lambda)$, the problem (\ref{1.1}) admits one sign-changing solution $u^*\in
		X_0$ and $\Phi(u^*)=m_{\alpha}$.
	\end{theorem}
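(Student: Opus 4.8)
The plan is to obtain $u^{*}$ as a minimizer of the $C^{1}$ energy $\Phi$ over the sign-changing Nehari set $\mathcal{M}$, then to promote this minimizer to a critical point of $\Phi$ by a constrained deformation argument built on Brouwer degree, and finally to check, via a comparison with $\underline{u}_{\lambda}$, that along $u^{*}$ the truncations $\overline{f},\overline{g}$ are inactive, so that $u^{*}$ solves \eqref{1.1}. Throughout, $\lambda\in(0,\Lambda)$ is taken small enough that $\underline{u}_{\lambda}$ is available and $\Phi$ retains its Nehari/mountain-pass geometry. The feature that makes the argument nonstandard is that $\Phi(u)\ne\Phi(u^{+})+\Phi(u^{-})$: since $u^{+}u^{-}\equiv0$ the nonlinear part does split, but the Gagliardo seminorm leaves the strictly positive nonlocal interaction term $B_{u}(s,t):=\|su^{+}+tu^{-}\|^{p}-s^{p}\|u^{+}\|^{p}-t^{p}\|u^{-}\|^{p}$ (a weighted double integral of $(a+b)^{p}-a^{p}-b^{p}$ over $\{u>0\}\times\{u<0\}$), which is positively $p$-homogeneous and strictly increasing in $s$ and in $t$; on top of this, the singular truncations $\overline{f},\overline{g}$ destroy the homogeneity usually exploited in the Nehari analysis. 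This is why Step~1 below carries the weight of the proof. A further point requiring care, specific to the singular case, is that the $g$-term is linear in $u$ near the origin, so the sign-changing property of the weak limit cannot be read off from the usual norm lower bound; and one must repeatedly control the integrability and $L^{q}$-continuity of the terms built from $\overline{g}$ through the a priori estimate $\int_{\Omega}g(\underline{u}_{\lambda})\phi\,dx\le C\|\phi\|$ for $\phi\in X_0$.

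\emph{Step 1 (two-dimensional fibering; $\mathcal{M}\ne\emptyset$; boundedness).} For $u\in X_0$ with $u^{\pm}\ne0$ set $\psi_{u}(s,t)=\Phi(su^{+}+tu^{-})$, $s,t\ge0$. One checks $\psi_{u}(s,t)=\Phi(su^{+})+\Phi(tu^{-})+\tfrac1p B_{u}(s,t)$, so apart from the interaction term $\psi_{u}$ splits. Using $(f_{1})$--$(f_{2})$ near $0$, $(f_{3})$--$(f_{4})$ at infinity, the monotonicity in $(f_{5})$ and $(g_{1})$, and the monotonicity of $B_{u}$, I would show that the coupled system $\langle\Phi'(su^{+}+tu^{-}),su^{+}\rangle=\langle\Phi'(su^{+}+tu^{-}),tu^{-}\rangle=0$ has a unique solution $(s_{u},t_{u})\in(0,\infty)^{2}$, and that $(s_{u},t_{u})$ is the strict global maximum point of $\psi_{u}$ on $[0,\infty)^{2}$; existence of the pair comes from a Brouwer degree computation for the map $(s,t)\mapsto(\langle\Phi'(su^{+}+tu^{-}),su^{+}\rangle,\langle\Phi'(su^{+}+tu^{-}),tu^{-}\rangle)$ on a large square, whose boundary behaviour is controlled by the superlinearity in $(f_{3})$--$(f_{4})$. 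In particular $\mathcal{M}\ne\emptyset$, so $m_{\alpha}$ is well defined; and for $u\in\mathcal{M}$ one has $\langle\Phi'(u),u\rangle=0$, so combining with $(f_{3})$ and the estimate on $g(\underline{u}_{\lambda})$ yields $\Phi(u)\ge(\tfrac1p-\tfrac1\mu)\|u\|^{p}-C\|u\|-C$, whence $m_{\alpha}>-\infty$ and every minimizing sequence for $m_{\alpha}$ is bounded in $X_0$.

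\emph{Step 2 (a minimizer exists).} Take $\{u_{n}\}\subset\mathcal{M}$ with $\Phi(u_{n})\to m_{\alpha}$; by Step~1 it is bounded, so up to a subsequence $u_{n}\rightharpoonup u^{*}$ in $X_0$, $u_{n}\to u^{*}$ in $L^{q}(\Omega)$ for $q\in[1,p_{\alpha}^*)$ and a.e., and, since $|u^{\pm}(x)-u^{\pm}(y)|\le|u(x)-u(y)|$ pointwise, $u_{n}^{\pm}$ is bounded in $X_0$ with $u_{n}^{\pm}\rightharpoonup u^{*\pm}$. Next one shows $u^{*\pm}\ne0$; this is delicate here, since the linearity of the $g$-term near the origin blocks the usual bound $\|u_{n}^{\pm}\|\ge c>0$. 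Instead, if, say, $u^{*+}=0$, the $L^{q}$-convergence $u_{n}^{+}\to0$ gives $\int_{\Omega}\overline{f}(x,u_{n})u_{n}^{+}\to0$ and $\int_{\Omega}\overline{g}(u_{n})u_{n}^{+}\to0$, hence (from $\langle\Phi'(u_{n}),u_{n}^{+}\rangle=0$ and $B_{u_{n}}\ge0$) $u_{n}^{+}\to0$ in $X_0$, so $\Phi(u_{n})\to$ a one-signed energy level and $\langle\Phi'(u_{n}^{-}),u_{n}^{-}\rangle\to0$; the strict positivity of $B$ along sign-changing configurations, via $\Phi(u)=\Phi(u^{+})+\Phi(u^{-})+\tfrac1p B_{u}$ and the maximality in Step~1, then yields a contradiction. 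Thus $u^{*\pm}\ne0$. Let $(s^{*},t^{*})$ be the pair from Step~1 for $u^{*}$, so $v^{*}:=s^{*}u^{*+}+t^{*}u^{*-}\in\mathcal{M}$ and $s^{*}u_{n}^{+}+t^{*}u_{n}^{-}\rightharpoonup v^{*}$; by weak lower semicontinuity of $\Phi$, the maximality of Step~1 for $u_{n}\in\mathcal{M}$, and $\Phi(u_{n})\to m_{\alpha}$,
\[
m_{\alpha}\le\Phi(v^{*})\le\liminf_{n}\Phi(s^{*}u_{n}^{+}+t^{*}u_{n}^{-})\le\liminf_{n}\Phi(u_{n})=m_{\alpha},
\]
so $v^{*}$ minimizes $\Phi$ on $\mathcal{M}$; set $u^{*}:=v^{*}$.

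\emph{Step 3 ($u^{*}$ is a critical point of $\Phi$, and solves \eqref{1.1}).} Suppose $\Phi'(u^{*})\ne0$. A quantitative deformation lemma produces a deformation $\eta$ supported in a small ball around $u^{*}$ that strictly lowers $\Phi$ below $m_{\alpha}$ there; applying $\eta$ to $su^{*+}+tu^{*-}$ for $(s,t)$ in a small square around $(1,1)$, and using that the Brouwer degree of $(s,t)\mapsto(\langle\Phi'(su^{*+}+tu^{*-}),su^{*+}\rangle,\langle\Phi'(su^{*+}+tu^{*-}),tu^{*-}\rangle)$ on that square is nonzero — because, by Step~1, $(1,1)$ is its unique zero — one gets a point of $\mathcal{M}$ with $\Phi<m_{\alpha}$, contradicting minimality. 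Hence $\Phi'(u^{*})=0$, i.e.\ $u^{*}$ is a sign-changing weak solution of $(-\Delta_{p})^{\alpha}u=\lambda\overline{g}(u)+\overline{f}(x,u)$. Finally, a comparison of $u^{*}$ with $\underline{u}_{\lambda}$ (the solution of \eqref{squasinna}; this is where $\lambda<\Lambda$ enters) shows $\overline{g}(u^{*})=g(u^{*})$ and $\overline{f}(x,u^{*})=f(x,u^{*})$ a.e.\ on $\Omega$, so $u^{*}$ solves \eqref{1.1}, changes sign, and $\Phi(u^{*})=m_{\alpha}$. As flagged, the hard part is Step~1 — existence, uniqueness and maximality of $(s_{u},t_{u})$ with both the nonlocal interaction and the singular truncation present — together with the sign-changing part of Step~2.
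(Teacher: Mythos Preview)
Your overall architecture—construct $\mathcal{M}$ via a unique two-dimensional fibering pair that is also the strict global maximum of $\psi_u$, minimize $\Phi$ over $\mathcal{M}$, then use the quantitative deformation lemma together with a Brouwer-degree argument to show the minimizer is critical—is precisely the paper's route. The paper differs only in tactics: it invokes Miranda's theorem (rather than Brouwer degree) for the existence of $(s_u,t_u)$; it proves boundedness of the minimizing sequence by contradiction via $(f_4)$ and Fatou (normalize and blow up) rather than your direct coercivity from $(f_3)$; and for the minimization it first shows $t^*,s^*\le1$ and then uses the monotone quantity $\mathcal{H}(x,\tau)=f(x,\tau)\tau-pF(x,\tau)$ together with Fatou, whereas you go through weak lower semicontinuity plus the maximality from Step~1. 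The paper also secures $(u^*)^{\pm}\ne0$ simply by proving a uniform bound $|u^{\pm}|_q^q\ge\mu_2$ on $\mathcal{M}$ (Lemma~\ref{Lem-B1}) and passing to the $L^q$-limit; your contradiction argument is more elaborate than what the paper actually does, though your worry about the $\overline g$-term being only linear near $0$ is legitimate—the paper's choice of $\lambda_1$ in that lemma is not cleanly uniform as written.

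There is, however, a genuine gap at the end of your Step~3, and it is one the paper does not close either. You claim that comparison of $u^*$ with $\underline{u}_\lambda$ yields $\overline g(u^*)=g(u^*)$ and $\overline f(x,u^*)=f(x,u^*)$. But the truncation sets $\overline g(t)=g(\underline{u}_\lambda)$ and $\overline f(x,t)=f(x,\underline{u}_\lambda)$ for \emph{all} $t\le\underline{u}_\lambda$, and $\underline{u}_\lambda>0$ in $\Omega$; since $u^*$ changes sign, on $\{u^*<0\}$ one has $u^*<0<\underline{u}_\lambda$, so no comparison principle can ever yield $u^*\ge\underline{u}_\lambda$ there, and on that set $\overline g(u^*)=g(\underline{u}_\lambda)\ne g(u^*)$. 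Thus a sign-changing critical point of $\Phi$ is \emph{not}, by this mechanism, a weak solution of \eqref{1.1}. The paper's final line in Lemma~\ref{Lem-critical-point} (``since the critical points of $\Phi$ are also critical points of $I_\lambda$'') is asserted without argument and faces exactly the same obstruction. A genuine repair would require a two-sided truncation built from a negative barrier as well—which the hypotheses $(g_1)$--$(g_2)$ on the negative half-line seem designed to support—or a different desingularization altogether; as written, neither your proposal nor the paper supplies this step.
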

	\noindent The paper is organized as follows. In Section 2, we present some
	useful notations and give some preliminary results. In Section 3,
	we apply the method of Nehari manifold to
	prove Theorem \ref{Thm1.1}. Throughout the paper, we always denote
	by $C_1, C_2, \cdots$ positive constants (possibly different in
	different places) and let $|\cdot|_p$ denote the usual $L^p(\Omega)$
	norm for all $p\in[1,+\infty]$.

	\section{Important Lemmas}
	We begin this section with the following Lemma.
	\begin{lemma}\label{lambda finite}
		Assume $0<\delta<1<q < p_s^{*}-1$. Then $0<\varLambda<\infty$.
	\end{lemma}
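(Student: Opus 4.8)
The plan is to prove the two bounds separately: $\Lambda>0$ is equivalent to solvability of \eqref{1.1} for every sufficiently small $\lambda$, while $\Lambda<\infty$ is equivalent to non-solvability of \eqref{1.1} for at least one large $\lambda$.

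For the lower bound I would run a sub- and super-solution scheme built on $\underline{u}_\lambda$, the solution of the purely singular problem \eqref{squasinna}. Since $g$ is nonincreasing, $(-\Delta_p)^\alpha(\tfrac12\underline{u}_\lambda)=(\tfrac12)^{p-1}\lambda g(\underline{u}_\lambda)\le(\tfrac12)^{p-1}\lambda g(\tfrac12\underline{u}_\lambda)$, so $\tfrac12\underline{u}_\lambda$ is a sub-solution of \eqref{1.1} as soon as $f(x,\tfrac12\underline{u}_\lambda)\ge-\bigl(1-(\tfrac12)^{p-1}\bigr)\lambda\,g(\tfrac12\underline{u}_\lambda)$; by $(f_1)$ one has $|f(x,t)|\le\varepsilon(t)\,t^{p-1}$ with $\varepsilon(t)\to0$, and since $\|\underline{u}_\lambda\|_\infty\to0$ as $\lambda\to0$ together with the scaling bound $\underline{u}_\lambda^{\,p-1+\delta}\le C\lambda$ (comparison with the model equation $(-\Delta_p)^\alpha v=\lambda c\,v^{-\delta}$), this inequality indeed holds for $\lambda$ small. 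Symmetrically, a dilation $c\underline{u}_\lambda$ with $c=c(\lambda)\to+\infty$ but $c\|\underline{u}_\lambda\|_\infty$ kept below the threshold $C_0^{-1/(q-p)}$ coming from $(f_2)$ is a super-solution of \eqref{1.1}: the required inequality $(c^{p-1}-1)\lambda g(\underline{u}_\lambda)\ge f(x,c\underline{u}_\lambda)$ holds near $\partial\Omega$, where the left side blows up like $\underline{u}_\lambda^{-\delta}$ while the right side tends to $0$, and on compact subsets of $\Omega$, where $\|\underline{u}_\lambda\|_\infty$ is small and $\lambda$ scales accordingly. As $\tfrac12\underline{u}_\lambda\le c\underline{u}_\lambda$, a monotone iteration between this ordered pair — legitimate because $s\mapsto\lambda g(s)+f(x,s)$ is a bounded continuous perturbation on the relevant order interval and $(-\Delta_p)^\alpha$ obeys the usual weak comparison principle — yields a weak solution of \eqref{1.1} for each small $\lambda$, hence $\Lambda>0$.

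For the upper bound, let $\lambda_1>0$ be the first eigenvalue of $(-\Delta_p)^\alpha$ on $\Omega$, with positive eigenfunction $e_1$, normalized so that $\|e_1\|^p=\lambda_1\int_\Omega e_1^{p}$. Using $g(t)\ge\tfrac{c_1}{2}t^{-\delta}$ for small $t$ (from $(g_2)$), $f(x,t)\ge(\lambda_1+1)t^{p-1}$ for large $t$ (from $(f_4)$), and the boundedness of $g$ and $f$ with $g$ bounded below by a positive constant on the intermediate range, I would fix $\Lambda_0<\infty$ such that for every $\lambda>\Lambda_0$ one has $\lambda g(t)+f(x,t)>\lambda_1 t^{p-1}$ for all $t>0$ and $x\in\Omega$. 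If \eqref{1.1} admitted a (positive) weak solution $u$ for some such $\lambda$, then inserting $\phi=e_1^{p}u^{1-p}$ in Definition \ref{defn weak} and invoking the nonlocal Picone-type inequality $\langle(-\Delta_p)^\alpha u,\,e_1^{p}u^{1-p}\rangle\le\|e_1\|^p$ would give
\[
\lambda_1\int_\Omega e_1^{p}\;=\;\|e_1\|^p\;\ge\;\int_\Omega\frac{\lambda g(u)+f(x,u)}{u^{p-1}}\,e_1^{p}\;>\;\lambda_1\int_\Omega e_1^{p},
\]
a contradiction; hence \eqref{1.1} has no weak solution for $\lambda>\Lambda_0$, so $\Lambda\le\Lambda_0<\infty$.

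The main obstacle is the lower bound: because $f$ is superlinear there is no \emph{global} super-solution of \eqref{1.1}, so the ordered pair of sub/super-solutions must be produced precisely at the scale $\|\underline{u}_\lambda\|_\infty\sim\lambda^{1/(p-1+\delta)}$, which forces one to combine the flatness of $f$ at the origin with sharp $L^\infty$ and boundary-decay estimates for $\underline{u}_\lambda$. A lesser technical point in the upper bound is the admissibility of the test function $e_1^{p}u^{1-p}$ — i.e. the positivity and boundary behaviour of any weak solution $u$ — which can be secured by comparing $u$ with $\underline{u}_\lambda$ using the monotonicity of $g$.
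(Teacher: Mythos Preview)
The paper does not actually prove this lemma: its entire ``proof'' is the sentence ``This result can be proved by working on the similar lines as of \cite{saoudi2018multiplicity}.'' So there is no real argument to compare against; you have supplied far more than the paper does.

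Your upper bound ($\Lambda<\infty$) is the standard eigenvalue/Picone obstruction and is exactly what one expects the cited reference to contain: for $\lambda$ large the right-hand side dominates $\lambda_1 t^{p-1}$ for all $t>0$, and the nonlocal Picone inequality applied to a positive weak solution forces a contradiction. The caveat you flag --- that $e_1^{p}u^{1-p}$ must be an admissible test function, which requires a lower barrier for $u$ --- is the right technical point, and your remedy (compare $u$ with $\underline{u}_\lambda$ via the comparison principle) is the usual one. This half is fine.

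For the lower bound your sub-/super-solution scheme is plausible but more delicate than it looks, and probably not the route taken in \cite{saoudi2018multiplicity}. The fragile step is the super-solution $c\,\underline{u}_\lambda$: you need $(c^{p-1}-1)\lambda\,g(\underline{u}_\lambda)\ge f(x,c\,\underline{u}_\lambda)$ \emph{uniformly} on $\Omega$, and on interior compacta this couples the growth of $c$ to the decay of $\|\underline{u}_\lambda\|_\infty$ in a way that demands sharp two-sided $L^\infty$ and boundary estimates on $\underline{u}_\lambda$ (the ``$\underline{u}_\lambda^{\,p-1+\delta}\le C\lambda$'' scaling you invoke). These estimates are available in the literature but are nontrivial to state and cite precisely for the fractional $p$-Laplacian. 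A shorter and more self-contained route --- and the one the present paper has already set up --- is to work directly with the $C^1$ cut-off functional $\Phi$: for small $\lambda$ the singular term contributes only $O(\lambda\|u\|^{1-\delta})$, so $\Phi$ has mountain-pass geometry (or a minimum on the Nehari set $\mathcal N$), yielding a critical point of $\Phi$ which, by the comparison $u\ge\underline{u}_\lambda$, is a weak solution of \eqref{1.1}. This is almost certainly what \cite{saoudi2018multiplicity} does, and it avoids the balancing act in your super-solution construction.
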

	\begin{proof}
		This result can be proved by working on the similar lines as of \cite{saoudi2018multiplicity}.
	\end{proof}
	\noindent The following Lemma due to \cite{BL}, will be useful in the proof of Theorem \ref{Thm1.1}.
	\begin{lemma}\label{Ines}
		
		\begin{enumerate}[label=(\roman*)]
			\item For $2<p<\infty$, there exists $d_1,d_2>0$ such that, for all $\xi, \eta\in \mathbb{R}^N$,
			\begin{align}\label{7-30-21}
			\begin{split}
			&(|\xi|^{p-2}\xi-|\eta|^{p-2}\eta)(\xi-\eta)\geq d_1|\xi-\eta|^{p},\\ &||\xi|^{p-2}\xi-|\eta|^{p-2}\eta|\leq d_2 (|\xi|+|\eta|)^{p-2}|\xi-\eta|
			\end{split}
			\end{align}
			\item For $1<p\leq2$, there exist $d_3,d_4>0$ such that, for all $\xi, \eta\in \mathbb{R}^N$,
			\begin{align}\label{7-26-2}
			\begin{split}
			&(|\xi|^{p-2}\xi-|\eta|^{p-2}\eta)(\xi-\eta)\ge
			d_3\frac{|\xi-\eta|^2}{(|\xi|+|\eta|)^{2-p}},
			\\&||\xi|^{p-2}\xi-|\eta|^{p-2}\eta|\le
			d_4|\xi-\eta|^{p-1}
			\end{split}
			\end{align}
		\end{enumerate}
		\end{lemma}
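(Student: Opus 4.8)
The plan is to derive all four estimates from a single source: the first-order analysis of the vector field $V(\zeta)=|\zeta|^{p-2}\zeta$ on $\mathbb{R}^N$, whose Jacobian away from the origin is
$$DV(\zeta)=|\zeta|^{p-2}\Big(I+(p-2)\frac{\zeta\otimes\zeta}{|\zeta|^2}\Big).$$
This symmetric matrix has eigenvalue $(p-1)|\zeta|^{p-2}$ in the direction $\zeta$ and $|\zeta|^{p-2}$ on the orthogonal complement, so for every $h\in\mathbb{R}^N$,
$$DV(\zeta)h\cdot h=|\zeta|^{p-2}\Big(|h|^2+(p-2)\frac{(\zeta\cdot h)^2}{|\zeta|^2}\Big),\qquad \|DV(\zeta)\|_{\mathrm{op}}=\max\{1,p-1\}\,|\zeta|^{p-2}.$$
I would then write, for $\xi\neq\eta$ and $w(t)=\eta+t(\xi-\eta)$,
$$|\xi|^{p-2}\xi-|\eta|^{p-2}\eta=\int_0^1 DV(w(t))(\xi-\eta)\,dt,$$
which is the engine for every bound. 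The only subtlety in this representation is the degenerate configuration where the segment $[\eta,\xi]$ crosses the origin; there $|w(t)|^{p-2}$ is merely bounded when $p\ge2$ and is integrable when $1<p\le2$ (since $p-2>-1$), so the identity persists either directly or by perturbing $\xi,\eta$ off the degenerate line and using continuity of $V$.

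For $2<p<\infty$ the upper bound is immediate: pairing with $\|DV\|_{\mathrm{op}}=(p-1)|\zeta|^{p-2}$ and using $|w(t)|\le|\xi|+|\eta|$ together with $p-2\ge0$ gives the second inequality of (i) with $d_2=p-1$. For the monotonicity lower bound I would drop the nonnegative term in the quadratic form to obtain $DV(w(t))(\xi-\eta)\cdot(\xi-\eta)\ge|w(t)|^{p-2}|\xi-\eta|^2$, reducing matters to the scalar claim $\int_0^1|w(t)|^{p-2}\,dt\ge c_p|\xi-\eta|^{p-2}$. After rotating so that $\xi-\eta$ lies along a coordinate axis and substituting $s=\eta_1+t|\xi-\eta|$, and using $(s^2+|\eta_\perp|^2)^{(p-2)/2}\ge|s|^{p-2}$ (valid since $p-2\ge0$), this becomes a lower bound for $\int_a^{a+L}|s|^{p-2}\,ds$ over an interval of length $L=|\xi-\eta|$; differentiating in $a$ shows the minimum occurs at the symmetric interval $[-L/2,L/2]$, giving $\int_0^1|w|^{p-2}\ge (p-1)^{-1}2^{2-p}|\xi-\eta|^{p-2}$ and hence the first inequality of (i).

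For $1<p\le2$ the two roles swap. Now $p-2\le0$ forces $(p-2)(\zeta\cdot h)^2/|\zeta|^2\ge(p-2)|h|^2$, so the quadratic form satisfies $DV(\zeta)h\cdot h\ge(p-1)|\zeta|^{p-2}|h|^2$; combining with $|w(t)|\le|\xi|+|\eta|$ and $p-2<0$ (which reverses the bound on $|w|^{p-2}$) yields $(|\xi|^{p-2}\xi-|\eta|^{p-2}\eta)\cdot(\xi-\eta)\ge(p-1)|\xi-\eta|^2(|\xi|+|\eta|)^{p-2}$, i.e. the first inequality of (ii) with $d_3=p-1$. For the H\"older upper bound I would use $\|DV\|_{\mathrm{op}}=|\zeta|^{p-2}$ to reduce to the scalar claim $\int_0^1|w(t)|^{p-2}\,dt\le C_p|\xi-\eta|^{p-2}$; the same rotation and substitution, now with $(s^2+|\eta_\perp|^2)^{(p-2)/2}\le|s|^{p-2}$, reduce this to an \emph{upper} bound on $\int_a^{a+L}|s|^{p-2}\,ds$, which (since $|s|^{p-2}$ spikes at $0$ yet is integrable because $p-2>-1$) is maximized at the symmetric interval, giving $d_4=(p-1)^{-1}2^{2-p}$.

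The main obstacle, and the only place real work is needed, is the one-dimensional integral estimate $\int_a^{a+L}|s|^{p-2}\,ds$ together with its reduction: controlling $\int_0^1|w(t)|^{p-2}\,dt$ from both sides by a constant multiple of $|\xi-\eta|^{p-2}$, uniformly in the position of $\eta$ relative to $\xi$. Everything else (the Jacobian, its eigenvalues, and the fundamental-theorem representation) is mechanical, and the degenerate origin-crossing case is dispatched by the integrability $p-2>-1$ and the continuity of $V$.
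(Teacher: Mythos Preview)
Your argument is correct. The paper itself does not prove this lemma: it simply attributes the inequalities to Bartsch and Liu \cite{BL} and states them without proof. Your proposal therefore goes well beyond what the paper does, supplying a complete self-contained derivation via the integral representation
\[
|\xi|^{p-2}\xi-|\eta|^{p-2}\eta=\int_0^1 DV\big(\eta+t(\xi-\eta)\big)(\xi-\eta)\,dt,
\qquad DV(\zeta)=|\zeta|^{p-2}\Big(I+(p-2)\tfrac{\zeta\otimes\zeta}{|\zeta|^2}\Big),
\]
together with the spectral bounds $\min\{1,p-1\}|\zeta|^{p-2}\le DV(\zeta)\le\max\{1,p-1\}|\zeta|^{p-2}$. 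The reduction of the two ``hard'' estimates (the monotonicity bound for $p>2$ and the H\"older bound for $1<p\le2$) to the one-dimensional problem of extremizing $\int_a^{a+L}|s|^{p-2}\,ds$ over $a$ is standard and you carry it out correctly: the symmetric interval $a=-L/2$ is the unique critical point of this function of $a$, a minimum when $p>2$ and a maximum when $1<p<2$, giving the explicit constants $d_1=d_4=(p-1)^{-1}2^{2-p}$ and $d_2=d_3=p-1$. Your handling of the origin-crossing degeneracy (continuity of $V$ and integrability of $|w(t)|^{p-2}$ since $p-2>-1$) is also adequate. There is nothing to correct; if anything, your write-up is more informative than the paper's bare citation.
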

	\noindent We have the following comparison principle for the fractional $p$-Laplacian operator.
	\begin{lemma}[Weak Comparison Principle]\label{weak comparison}
		Let $u, v\in X_0$. Suppose, $(-\Delta_p)^sv-\frac{\lambda}{v^{\gamma}}\geq(-\Delta_p)^su-\frac{\lambda}{u^{\gamma}}$ weakly with $v=u=0$ in $\mathbb{R}^N\setminus\Omega$.
		Then $v\geq u$ in $\mathbb{R}^N.$
	\end{lemma}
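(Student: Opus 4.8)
The plan is to run the classical comparison argument based on testing against $(u-v)^{+}$, adapted to the nonlocal setting. First I would check that $\phi:=(u-v)^{+}$ is an admissible test function: since $u,v\in X_0$ and taking the positive part does not increase the Gagliardo seminorm, $\phi\in X_0$, and $\phi\ge0$ with $\phi\equiv0$ on $\mathbb{R}^{N}\setminus\Omega$. Inserting $\phi$ into the weak form of the hypothesis $(-\Delta_p)^s v-\lambda v^{-\gamma}\ge(-\Delta_p)^s u-\lambda u^{-\gamma}$ (i.e. the integral inequality tested against nonnegative elements of $X_0$) and subtracting the two inequalities yields
\begin{equation*}
\int_Q\frac{(|b|^{p-2}b-|a|^{p-2}a)\,(\phi(x)-\phi(y))}{|x-y|^{N+ps}}\,dx\,dy\ \ge\ \lambda\int_{\Omega}(v^{-\gamma}-u^{-\gamma})\,\phi\,dx,
\end{equation*}
where, for $(x,y)\in Q$, I abbreviate $a=u(x)-u(y)$ and $b=v(x)-v(y)$.

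Next I would show the right-hand side is nonnegative and the left-hand side is nonpositive. For the right-hand side, on $\{u>v\}\cap\Omega$ one has $\phi=u-v>0$ and, since $u,v>0$ in $\Omega$ and $\tau\mapsto-\lambda\tau^{-\gamma}$ is increasing on $(0,\infty)$, $v^{-\gamma}-u^{-\gamma}>0$; off this set $\phi=0$, so the whole integral is $\ge0$. For the left-hand side I would prove the integrand is pointwise $\le0$: writing $r=u(x)-v(x)$ and $t=u(y)-v(y)$, we have $a-b=r-t$ and $\phi(x)-\phi(y)=r^{+}-t^{+}$; since $\tau\mapsto|\tau|^{p-2}\tau$ is strictly increasing, $|a|^{p-2}a-|b|^{p-2}b$ has the same sign as $a-b=r-t$, while $\tau\mapsto\tau^{+}$ is nondecreasing, so $r^{+}-t^{+}$ has the same sign as $r-t$ or vanishes. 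Hence $(|b|^{p-2}b-|a|^{p-2}a)(\phi(x)-\phi(y))=-(|a|^{p-2}a-|b|^{p-2}b)(r^{+}-t^{+})\le0$ for every $(x,y)\in Q$.

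Combining the two bounds forces both integrals to vanish; in particular the nonpositive left-hand integrand is $0$ for a.e.\ $(x,y)\in Q$. To conclude, suppose $S:=\{x\in\Omega: u(x)>v(x)\}$ had positive measure. For $x\in S$ and $y\in\mathbb{R}^{N}\setminus\Omega$ — note $S\times(\mathbb{R}^{N}\setminus\Omega)\subset Q$ has positive (indeed infinite) measure — we get $r=u(x)-v(x)>0$ and $t=u(y)-v(y)=0$, hence $a-b=r>0$ and $r^{+}-t^{+}=r>0$, so the left-hand integrand is strictly negative on a set of positive measure, a contradiction. Therefore $u\le v$ a.e.\ in $\Omega$, and since $u=v=0$ on $\mathbb{R}^{N}\setminus\Omega$, $v\ge u$ in $\mathbb{R}^{N}$. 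I expect the only delicate point to be the pointwise sign analysis of the nonlocal integrand — the factorization $-(|a|^{p-2}a-|b|^{p-2}b)(r^{+}-t^{+})$ and the compatibility of the two factors' signs — together with the observation that the vanishing of $u$ and $v$ outside $\Omega$ upgrades this to a strict sign and closes the argument; the admissibility of $(u-v)^{+}$, the sign of the singular term (which rests on the positivity of $u,v$ in $\Omega$), and the measure-theoretic step are routine.
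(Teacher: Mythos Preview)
Your argument is correct and follows essentially the same route as the paper: both test the weak inequality against $\phi=(u-v)^{+}$, exploit the monotonicity of $\tau\mapsto|\tau|^{p-2}\tau$ (the paper via the integral identity $|b|^{p-2}b-|a|^{p-2}a=(p-1)(b-a)\int_{0}^{1}|a+t(b-a)|^{p-2}\,dt$, you via a direct sign comparison of the two factors) together with the monotonicity of the singular term, and conclude $(u-v)^{+}\equiv0$. Your final contradiction step, which uses points $y\in\mathbb{R}^{N}\setminus\Omega$ where $u=v=0$ to force strict negativity of the integrand, is in fact more explicit than the paper's closing line.
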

	\begin{proof}
		Since, $(-\Delta_p)^sv-\frac{\lambda}{v^{\gamma}}\geq(-\Delta_p)^su-\frac{\lambda}{u^{\gamma}}$ weakly with $u=v=0$ in $\mathbb{R}^N\setminus\Omega$, we have
		\begin{align}\label{compprinci}
			\langle(-\Delta_p)^sv,\phi\rangle-\int_{\Omega}\frac{\lambda\phi}{v^{\gamma}}dx&\geq\langle(-\Delta_p)^su,\phi\rangle-\int_{\Omega}\frac{\lambda\phi}{u^{\gamma}}dx, ~\forall{\phi\geq 0\in X_0}.
		\end{align}
		In particular choose $\phi=(u-v)^{+}$. To this choice, \eqref{compprinci} looks as follows.
		\begin{align}\label{compprinci1}
			\langle(-\Delta_p)^sv-(-\Delta_p)^su,(u-v)^{+}\rangle-\int_{\Omega}\lambda(u-v)^{+}\left(\frac{1}{v^{\gamma}}-\frac{1}{u^{\gamma}}\right)dx&\geq 0.
		\end{align}
 
		Let $\psi=u-v$. 
		The identity 
		\begin{align}\label{identity}
			|b|^{p-2}b-|a|^{p-2}a&=(p-1)(b-a)\int_0^1|a+t(b-a)|^{p-2}dt
		\end{align}
		with $a=v(x)-v(y)$, $b=u(x)-u(y)$ gives
		\begin{align}
			|u(x)-u(y)|^{p-2}(u(x)-u(y))-|v(x)-v(y)|^{p-2}(u(x)-u(y))\nonumber\\
			=(p-1)\{(u(y)-v(y))-(u(x)-v(x))\}Q(x,y)
		\end{align}
		where 
		\begin{align}
			Q(x,y)&=\int_0^1|(u(x)-u(y))+t((v(x)-v(y))-(u(x)-u(y)))|^{p-2}dt.
		\end{align}
		We choose the test function $\phi=(u-v)^{+}$. We express,
		\begin{align}
			\psi&=u-v=(u-v)^{+}-(u-v)^{-}\nonumber
		\end{align}
		to further obtain
		\begin{align}\label{negativity}
			[\psi(y)-\psi(x)][\phi(x)-\phi(y)]&=-(\psi^{+}(x)-\psi^{+}(y))^2.
		\end{align}
		The equation \eqref{negativity} implies
		\begin{align}
			0&\geq \langle(-\Delta_p)^sv-(-\Delta_p)^su,(v-u)^{+}\rangle\nonumber\\&=-(p-1)\frac{Q(x,y)}{|x-y|^{N+sp}}(\psi^{+}(x)-\psi^{+}(y))^2\nonumber\\&\geq0.
		\end{align}
		This leads to the conclusion that the Lebesgue measure of $\Omega^{+}$, i.e., $|\Omega^{+}|=0$. In other words $v\geq u$ a.e. in $\Omega$.
	\end{proof}

	\section{Proof of Theorem \ref{Thm1.1}}
	
	In this section we prove the existence of a sign-changing solution for \eqref{1.1} by obtaining a minimizer of the energy functional $\Phi$ over
	\begin{equation*}
	\mathcal{M}=\{u\in X_0: u^{\pm}\neq0, \langle\Phi'(u), u^+\rangle=\langle\Phi'(u),	u^-\rangle=0\}.
	\end{equation*}
	Further we will verify that the obtained minimizer is a sign-changing solution to \eqref{1.1}. Since, it is difficult to show that $\mathcal{M}\neq\emptyset$, we will prove that $\mathcal{M}\neq\emptyset$ by using the parametric method. We prove that, if $u\in X_0$ with $u^{\pm}\neq0$, the there exists a unique pair $(s, t)\in\mathbb{R}^+\times\mathbb{R}^+$, such that $su^+ + tu^-\in\mathcal{M}$. Finally to conclude that the minimizer of the constrained problem is a sign-changing solution, we use the quantitative deformation lemma (see Lemma 2.3 of \cite{Willem}) and Brouwer degree theory.
		
	\begin{lemma}\label{Lem-B1}
		Let the assumptions of Theorem \ref{Thm1.1} holds, then there exist
		$\mu_1,\mu_2>0$ and $\lambda_1>0$ such that
		\begin{description}
			\item[(i)]
			$\|u^{\pm}\|\ge \mu_1, \forall u \in \mathcal{M}$;
			\item[(ii)]$\int_{\Omega}|u^{\pm}|^qdx\ge \mu_2,\forall u \in
			\mathcal{M}$.
		\end{description}
	\end{lemma}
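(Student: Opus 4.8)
The plan is to fix $u\in\mathcal M$, so that $u^{\pm}\neq0$ and $\langle\Phi'(u),u^{+}\rangle=\langle\Phi'(u),u^{-}\rangle=0$, and to produce both bounds for $u^{+}$; since the hypotheses $(g_1)$--$(g_2)$ and $(f_5)$ are symmetric in the two half-lines, the reasoning for $u^{-}$ is the mirror image, and the overall scheme parallels the non-singular sign-changing argument of \cite{chang}. The only genuinely new ingredient is that, uniformly over $\mathcal M$, one must now keep control of the singular contribution $\lambda\int_{\Omega}\overline g(u)u^{+}\,dx$.

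For (i) I would begin with the elementary pointwise inequality
\[
|a-b|^{p-2}(a-b)\big(a^{+}-b^{+}\big)\;\ge\;\big|a^{+}-b^{+}\big|^{p},\qquad a,b\in\mathbb R,\ p>1,
\]
which one checks by reducing to $a\ge b$ and using $0\le a^{+}-b^{+}\le a-b$ (alternatively it can be extracted from Lemma \ref{Ines}). Integrating this against the kernel $|x-y|^{-(N+p\alpha)}$ over $Q$ gives $\langle(-\Delta_p)^{\alpha}u,u^{+}\rangle\ge\|u^{+}\|^{p}$, so that $\langle\Phi'(u),u^{+}\rangle=0$ forces
\[
\|u^{+}\|^{p}\;\le\;\lambda\int_{\Omega}\overline g(u)\,u^{+}\,dx+\int_{\Omega}\overline f(x,u)\,u^{+}\,dx .
\]
I would then estimate the right-hand side term by term. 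On $\{u>0\}$ the monotonicity of $g$ together with the definition of $\overline g$ gives $0<\overline g(u)\le g(\underline u_{\lambda})$; since $0<\delta<1$ and $\underline u_{\lambda}$ enjoys the strict positivity on compacts and the boundary decay furnished by the uniqueness lemma for \eqref{squasinna} (following \cite{canino2017nonlocal}), the function $g(\underline u_{\lambda})$ lies in a Lebesgue space dual to the embedding $X_0\hookrightarrow L^{p_{\alpha}^{*}}(\Omega)$, so Hölder's inequality and \eqref{sobolev const} yield $\lambda\int_{\Omega}\overline g(u)u^{+}\,dx\le\lambda C\|u^{+}\|$. For the nonlinear term, $(f_1)$ and $(f_2)$ give, for each $\varepsilon>0$, a constant $C_{\varepsilon}>0$ with $|\overline f(x,t)|\le\varepsilon|t|^{p-1}+C_{\varepsilon}|t|^{q-1}$ for all $t\in\mathbb R$, whence
\[
\int_{\Omega}\overline f(x,u)\,u^{+}\,dx\;\le\;\varepsilon\int_{\Omega}|u^{+}|^{p}\,dx+C_{\varepsilon}\int_{\Omega}|u^{+}|^{q}\,dx\;\le\;\varepsilon C\|u^{+}\|^{p}+C_{\varepsilon}C\|u^{+}\|^{q}.
\]
Choosing $\varepsilon$ small enough to absorb the first term into the left-hand side and then dividing by $\|u^{+}\|\neq0$, one reaches $\tfrac12\|u^{+}\|^{p-1}\le\lambda C+C_{\varepsilon}C\|u^{+}\|^{q-1}$; since $q>p$, a suitable choice of $\lambda_1>0$ makes this incompatible with $\|u^{+}\|$ small, giving $\|u^{+}\|\ge\mu_1$ for a constant $\mu_1$ independent of $u$ and of $\lambda\in(0,\lambda_1)$. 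The same computation applied to $u^{-}$ (with the corresponding parts of $(g_1)$--$(g_2)$, $(f_5)$) completes (i).

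For (ii) I would feed the bound of (i) back into the displayed estimate for $\|u^{+}\|^{p}$: it gives $C_{\varepsilon}\int_{\Omega}|u^{+}|^{q}\,dx\ge\tfrac12\|u^{+}\|^{p}-\lambda C\|u^{+}\|\ge\|u^{+}\|\big(\tfrac12\mu_1^{p-1}-\lambda C\big)$, and shrinking $\lambda_1$ once more keeps the right-hand side above some $\mu_2>0$; the argument for $u^{-}$ is identical. I expect the main obstacle to be precisely this singular term: unlike in the pure power setting it cannot be discarded, so one genuinely needs the quantitative description of $\underline u_{\lambda}$ together with the constraint $0<\delta<1$ to place $g(\underline u_{\lambda})$ in the correct Lebesgue space, and it is exactly the resulting term that is linear in $\|u^{\pm}\|$ which is responsible for the smallness requirement $\lambda<\lambda_1$ appearing in the statement.
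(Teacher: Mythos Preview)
Your argument tracks the paper's closely. Both start from $\langle\Phi'(u),u^{+}\rangle=0$ and extract $\|u^{+}\|^{p}\le\lambda\int_{\Omega}\overline g(u)u^{+}\,dx+\int_{\Omega}\overline f(x,u)u^{+}\,dx$ (you via the pointwise inequality $|a-b|^{p-2}(a-b)(a^{+}-b^{+})\ge|a^{+}-b^{+}|^{p}$, the paper by writing $\langle\Phi'(u),u^{+}\rangle=\langle\Phi'(u^{+}),u^{+}\rangle+2C_{1}^{+}(u)$ with $C_{1}^{+}(u)>0$; these are equivalent), then control the $f$-term through $(f_1)$--$(f_2)$ and attempt to absorb the singular term by taking $\lambda$ small. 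The one substantive difference is the singular estimate: you bound $\overline g(u)\le g(\underline u_{\lambda})$ and appeal to integrability of $g(\underline u_{\lambda})$, obtaining a term linear in $\|u^{+}\|$; the paper instead uses $\overline g(t)t\le g(t)t\le C|t|^{1-\delta}$ together with H\"older to get $C\lambda\|u^{+}\|^{1-\delta}$. The paper's route is the more elementary one and does not require boundary information on $\underline u_{\lambda}$, whereas placing $g(\underline u_{\lambda})$ in $L^{(p_{\alpha}^{*})'}(\Omega)$ would need a quantitative boundary decay of $\underline u_{\lambda}$ that is not supplied by the lemma you cite.

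The step that does not go through --- in your version and, in fact, in the paper's own proof --- is the absorption. From $\tfrac12\|u^{+}\|^{p-1}\le\lambda C+C_{\varepsilon}C\|u^{+}\|^{q-1}$ you cannot conclude a uniform positive lower bound on $\|u^{+}\|$ for any $\lambda>0$: with $t=\|u^{+}\|$, the left-hand side is $O(t^{p-1})\to0$ while the right-hand side stays $\ge\lambda C>0$, so every neighbourhood of $t=0$ contains admissible values. The same happens with the paper's exponent $1-\delta$: a sublinear term dominates $t^{p}$ near $t=0$, and the paper's choice ``$\lambda_{1}$ small so that $C_{2}\lambda_{1}\|u^{\pm}\|^{1-\delta}\le\tfrac14\|u^{\pm}\|^{p}$'' presupposes the very lower bound being proved. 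Some additional mechanism is needed to rule out the small-norm branch of the inequality; neither your proposal nor the paper supplies one.
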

	\begin{proof}
		We have $\langle\Phi'(u),u^{\pm}\rangle=0$ for every $u\in \mathcal{M}$. Therefore,
		$$\lambda\int_\Omega g(u)u^{\pm} dx+\int_\Omega f(x, u)u^{\pm} dx = \lambda\int_\Omega g(u^{\pm})u^{\pm} dx+\int_\Omega f(x,u^{\pm})u^{\pm} dx.$$
		By a simple computation one can obtain
		\begin{equation*}
			\langle\Phi'(u),u^+\rangle=\langle\Phi'(u^+),u^+\rangle+2 C_1^+(u),
		\end{equation*}
		where,
		\begin{equation*}
			0<C_1^+(u)\doteq
			\int_{\Omega^+}\int_{\Omega^-}\frac{|u^+(x)-u^-(y)|^{p-1}u^+(x)}{|x-y|^{N+p\alpha
			}}dxdy-\int_{\Omega^+}\int_{\Omega^-}\frac{|u^+(x)|^{p}}{|x-y|^{N+p\alpha
			}}dxdy.
		\end{equation*}
		Therefore, $\langle\Phi'(u^{+}), u^{+}\rangle<0$, and hence it follows that
		\begin{equation*}
			\|u^{+}\|^p<\lambda\int_\Omega g(u^+)u^+ dx+\int_\Omega f(x, u^+)u^+ dx
		\end{equation*}
		Similarly, we obtain
		\begin{equation*}
			\|u^{-}\|^p<\lambda\int_\Omega g(u^-)u^- dx+\int_\Omega f(x, u^-)u^- dx
		\end{equation*}
		Now by the assumptions $(f_1)-(f_2)$, we have for every	$\epsilon>0$, there exists $C_\epsilon>0$ such that
		\begin{equation}\label{8-28-1}
		f(x,\tau)\tau\le \epsilon |\tau|^{p}+C_{\epsilon}|\tau|^{q}, \forall x\in\overline{\Omega},
		~~\forall \tau\in \mathbb{R}.
		\end{equation}
		Therefore, by the Sobolev inequality and the growth condition of $g$, there exists
		$C_1\,,C_2>0$ such that
		\begin{equation}\label{lambda1 small}
			\|u^{\pm}\|^{p}\le \epsilon C_1
			\|u^{\pm}\|^p+C_{\epsilon}C_1\|u^{\pm}\|^{q} + C_2\lambda\|u^{\pm}\|^{1-\delta}.
		\end{equation}
		We now choose, $\lambda>0$ (say, $\lambda_1$) very small such that
		\begin{equation}\label{lambda1}
		C_2\lambda_1\|u^{\pm}\|^{1-\delta}\leq\frac{1}{4}\|u^{\pm}\|^{p}
		\end{equation}
		Since, $q\in(p,p_{\alpha}^*)$, by \eqref{lambda1 small} and \eqref{lambda1} and for $\epsilon=\frac{1}{2C_1}$, one can see (i) holds. Again, by (\ref{8-28-1}), \eqref{lambda1 small} and \eqref{lambda1}, we have
		\begin{align*}
			\mu_1^p&\leq \|u^{\pm}\|^{p}\\
			&\leq \epsilon C_1
			\|u^{\pm}\|^p+C_{\epsilon}|u^{\pm}|_q^q + C_2\lambda\|u^{\pm}\|^{1-\delta}\\
			&\leq \epsilon C_1
			\|u^{\pm}\|^p+C_{\epsilon}|u^{\pm}|_q^q +\frac{1}{4}\|u^{\pm}\|^{p}
		\end{align*}
		Therefore, for $\epsilon=\frac{1}{2C_1}$, we can obtain that
		\begin{equation*}
			|u^{\pm}|_q^q\ge \frac{\mu_1^p}{4C_{\epsilon}}\doteq \mu_2.
		\end{equation*}
		This completes the proof.
	\end{proof}
		
	\begin{lemma}\label{Lem-unique}
		Let $u\in X_0$ be such that $u^{\pm}\neq0$. Then there exists a unique pair $(t_{u}, s_{u})\in\mathbb{R^+}\times\mathbb{R^+}$ such that $t_{u}u^{+} + s_{u}u^{-}\in \mathcal{M}$.
	\end{lemma}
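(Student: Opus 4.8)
The plan is to reformulate the statement ``$tu^{+}+su^{-}\in\mathcal{M}$'' as an interior critical-point problem for the two-variable fibering map $\psi(t,s):=\Phi(tu^{+}+su^{-})$ on $[0,\infty)^{2}$, and then to establish existence and uniqueness of the critical point separately. Fix $u\in X_{0}$ with $u^{\pm}\neq0$ and put $\Omega^{\pm}:=\{x\in\Omega:\pm u(x)>0\}$. Since $t,s>0$ forces $(tu^{+}+su^{-})^{+}=tu^{+}$ and $(tu^{+}+su^{-})^{-}=su^{-}$, one has
\[
t\,\partial_{t}\psi(t,s)=\langle\Phi'(tu^{+}+su^{-}),tu^{+}\rangle=:\eta_{1}(t,s),\qquad s\,\partial_{s}\psi(t,s)=\langle\Phi'(tu^{+}+su^{-}),su^{-}\rangle=:\eta_{2}(t,s),
\]
so that $tu^{+}+su^{-}\in\mathcal{M}$ if and only if $(t,s)\in(0,\infty)^{2}$ and $\eta_{1}(t,s)=\eta_{2}(t,s)=0$. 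The first step is to make $\eta_{1},\eta_{2}$ explicit: exactly as in the decomposition used in the proof of Lemma~\ref{Lem-B1}, one has $\eta_{1}(t,s)=\langle\Phi'(tu^{+}),tu^{+}\rangle+2D^{+}(t,s)$ with
\[
D^{+}(t,s)=\int_{\Omega^{+}}\!\int_{\Omega^{-}}\frac{(tu^{+}(x)-su^{-}(y))^{p-1}\,tu^{+}(x)-|tu^{+}(x)|^{p}}{|x-y|^{N+p\alpha}}\,dx\,dy>0,
\]
and symmetrically $\eta_{2}(t,s)=\langle\Phi'(su^{-}),su^{-}\rangle+2D^{-}(t,s)$ with $D^{-}(t,s)>0$; here $D^{+}$ is strictly increasing in $s$, $D^{-}$ strictly increasing in $t$, while the remaining kinetic, singular and reaction contributions to $\eta_{1}$ involve only $t$ and $u^{+}$, and those to $\eta_{2}$ only $s$ and $u^{-}$, because $u^{+}$ is supported in $\overline{\Omega^{+}}$ (where $u^{-}=0$) and vice versa.

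For existence I would apply a Poincar\'{e}--Miranda (planar Brouwer) argument to the continuous field $(\eta_{1},\eta_{2})$ on a square $[\rho,R]^{2}$. Using $(f_{1})$--$(f_{2})$, the compact embedding $X_{0}\hookrightarrow L^{q}(\Omega)$, the growth bound on $g$, the positivity of $D^{\pm}$, and a smallness restriction on $\lambda$ as in Lemma~\ref{Lem-B1}, one shows $\eta_{1}(\rho,s)>0$ and $\eta_{2}(t,\rho)>0$ for all $(t,s)\in[\rho,R]^{2}$ once $\rho$ is small; using the superlinearity of $f$ from $(f_{3})$--$(f_{4})$ together with the boundedness of $\overline{g}$, one shows $\eta_{1}(R,s)<0$ and $\eta_{2}(t,R)<0$ for all $(t,s)\in[\rho,R]^{2}$ once $R$ is large. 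Poincar\'{e}--Miranda then yields a point $(t_{u},s_{u})\in(\rho,R)^{2}$ with $\eta_{1}(t_{u},s_{u})=\eta_{2}(t_{u},s_{u})=0$, i.e. $t_{u}u^{+}+s_{u}u^{-}\in\mathcal{M}$.

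For uniqueness the strategy is the standard one: it suffices to show that if $v\in\mathcal{M}$ and $av^{+}+bv^{-}\in\mathcal{M}$ for some $a,b>0$, then $a=b=1$ (and then apply this with $v=t_{1}u^{+}+s_{1}u^{-}$, $a=t_{2}/t_{1}$, $b=s_{2}/s_{1}$ to any two admissible pairs). Assuming $a\ge b$, I would subtract the ``$+$'' identity $\langle\Phi'(v),v^{+}\rangle=0$ from $a^{-p}$ times $\langle\Phi'(av^{+}+bv^{-}),av^{+}\rangle=0$: the kinetic terms cancel, the cross-terms leave $2\big(D^{+}(v^{+},v^{-})-a^{-p}D^{+}(av^{+},bv^{-})\big)\ge0$ (because $b/a\le1$), and on the other side the reaction term $\int_{\Omega}\big(\overline{f}(x,v^{+})(v^{+})^{1-p}-\overline{f}(x,av^{+})(av^{+})^{1-p}\big)(v^{+})^{p}$ is, by the strict monotonicity in $(f_{5})$ together with the lower bounds $\|v^{+}\|\ge\mu_{1}$ and $|v^{+}|_{q}^{q}\ge\mu_{2}$ of Lemma~\ref{Lem-B1}, strictly negative when $a>1$, while the remaining singular term is $O(\lambda)$ and sign-controlled by the monotonicity and boundedness of $\overline{g}$; hence for $\lambda$ small the identity forces $a\le1$. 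The ``$-$'' identities, subtracted after division by $b^{p}$, simplify further because $\overline{g}$ and $\overline{f}(x,\cdot)$ are independent of their scalar argument on $(-\infty,0]$, and yield symmetrically $b\ge1$; combined with $b\le a\le1$ this gives $a=b=1$.

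I expect the main obstacle to be precisely this interplay of the nonlocal cross-terms $D^{\pm}$ with the truncated singular nonlinearity. In the non-singular case $t\mapsto\eta_{1}(t,s)/t^{p}$ is strictly decreasing in $t$, which gives both the sign change (hence existence) and uniqueness for free; after the truncation, $\tau\mapsto\overline{f}(x,\tau)\tau^{1-p}$ and $\tau\mapsto\overline{g}(\tau)\tau^{1-p}$ are only eventually monotone and $\overline{g}$ contributes a term of the opposite monotonicity, so establishing the uniform sign inequalities on the edges of $[\rho,R]^{2}$ and pinning down the sign in the uniqueness step both require quantitatively dominating the singular contribution by the kinetic and reaction terms. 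This is where the a~priori bounds of Lemma~\ref{Lem-B1} and the restriction $\lambda<\Lambda$ are essential, and where most of the technical effort is concentrated.
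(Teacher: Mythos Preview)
Your overall strategy coincides with the paper's: both set up the two functions $\eta_{1}(t,s)=\langle\Phi'(tu^{+}+su^{-}),tu^{+}\rangle$ and $\eta_{2}(t,s)=\langle\Phi'(tu^{+}+su^{-}),su^{-}\rangle$, use the monotonicity of the cross terms in the ``other'' variable together with $(f_{1})$--$(f_{2})$ for small parameters and $(f_{4})$ for large ones to obtain the sign pattern on the boundary of a square, and then apply Miranda's theorem for existence; and both reduce uniqueness to the case $v\in\mathcal{M}$, subtract the two identities after dividing by the appropriate power, exploit the sign of the cross terms $B_{i}^{\pm}$ (your $D^{\pm}$), and conclude via the monotonicity in $(f_{5})$ combined with a smallness restriction on $\lambda$. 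The paper's uniqueness step is organised as a four-case analysis on the signs of the $f$- and $g$-contributions (with $\lambda$ small entering in Case~III), whereas you absorb the singular term directly as an $O(\lambda)$ perturbation dominated by the reaction term through the lower bounds of Lemma~\ref{Lem-B1}; these are cosmetically different but equivalent.

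There is, however, one point where your shortcut diverges from the paper and creates an internal inconsistency. You argue that on the ``$-$'' side the uniqueness computation simplifies because $\overline{f}(x,\cdot)$ and $\overline{g}$ are constant on $(-\infty,0]$ (which is true by the truncation at $\underline{u}_{\lambda}>0$). But if you use this in the uniqueness step, you cannot simultaneously invoke the superlinearity $(f_{3})$--$(f_{4})$ to get $\eta_{2}(t,R)<0$ in the existence step: with $\overline{f}(x,Ru^{-})$ constant in $R$, the reaction term in $\eta_{2}$ is only linear in $R$, while the kinetic part grows like $R^{p}$, so $\eta_{2}(t,R)\to+\infty$. The paper avoids this by treating the negative side exactly like the positive one, appealing to $(f_{5})$ and $(g_{1})$--$(g_{2})$ on $(-\infty,0)$ rather than to the constancy of the truncation; you should do the same, i.e.\ run the ``$-$'' uniqueness argument symmetrically to the ``$+$'' one instead of invoking constancy.
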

	\begin{proof}
		For every $t, s>0$, let us define $g_1$ and $g_2$ as
		\begin{align*}
			g_1(t,s)&=\langle\Phi'(tu^++su^-), tu^+\rangle\\
			&=\int_{\Omega^+}\int_{\Omega^+}\cfrac{|tu^+(x)-tu^+(y)|^{p}}{|x-y|^{N+p\alpha
			}}dxdy+\int_{\Omega^+}\int_{\Omega^c}\cfrac{|tu^+(x)|^{p}}{|x-y|^{N+p\alpha
			}}dxdy\\
			&\hspace{1.0cm}+\int_{\Omega^c}\int_{\Omega^+}\cfrac{|tu^+(y)|^{p}}{|x-y|^{N+p\alpha
			}}dxdy
			+\int_{\Omega^+}\int_{\Omega^-}\cfrac{|tu^+(x)-su^-(y)|^{p-1}tu^+(x)}{|x-y|^{N+p\alpha }}dxdy\\
			&\hspace{1.0cm}+\int_{\Omega^-}\int_{\Omega^+}\cfrac{|su^-(x)-tu^+(y)|^{p-1}tu^+(y)}{|x-y|^{N+p\alpha }}dxdy-\lambda\int_\Omega g(tu^+)tu^+ dx
			-\int_{\Omega}f(x,tu^+)tu^+dx
		\end{align*}
		and
		\begin{align*}
			g_2(t, s)&=\langle\Phi'(tu^++su^-), su^-\rangle\\
			&=\int_{\Omega^+}\int_{\Omega^-}\frac{|tu^+(x)-su^-(y)|^{p-1}(-su^-(y))}{|x-y|^{N+p\alpha }}dxdy+\int_{\Omega^c}\int_{\Omega^-}\frac{|-su^-(y)|^{p}}{|x-y|^{N+p\alpha}}dxdy\\
			&\hspace{1.0cm}+\int_{\Omega^-}\int_{\Omega^+}\frac{|su^-(x)-tu^+(y)|^{p-1}(-su^-(x))}{|x-y|^{N+p\alpha}}dxdy+\int_{\Omega^-}\int_{\Omega^c}\frac{|su^-(x)|^{p}}{|x-y|^{N+p\alpha}}dxdy\\
			&\hspace{1.0cm}+\int_{\Omega^-}\int_{\Omega^-}\frac{|su^-(x)-su^-(y)|^{p}}{|x-y|^{N+p\alpha}}dxdy-\lambda\int_\Omega g(su^-)su^- dx-\int_{\Omega}f(x,su^-)su^-dx.
		\end{align*}
		Now by using $(f_4)$, we have for any $C_1>0$, there exists $C_2>0$ such that
		\begin{equation}\label{8-28-2}
		f(x, \tau)\tau\ge C_1 |\tau|^{p}-C_{2}, ~~\forall x\in \overline{\Omega},\forall \tau\in \mathbb{R}.
		\end{equation}
		Therefore, by using $q\in(p,p_{\alpha}^*)$, (\ref{8-28-1}), (\ref{8-28-2}) and
		Lemma \ref{Lem-B1}, there exist $r_1>0$, $\lambda>0$ small enough
		and $R_1>0$ large enough such that
		\begin{eqnarray}
		&&g_1(t,t)>0,~ g_2(t,t)>0,~~ \forall t\in (0,r_1),\label{8-28-3}\\
		&&g_1(t,t)<0,~ g_2(t,t)<0,~~ \forall t\in (R_1,+\infty).\label{8-28-3(2)}
		\end{eqnarray}
		Observe that, for a fixed $t>0$, $g_1(t,s)$ is increasing in $s$ on $(0,+\infty)$ and for a fixed $s>0$, $g_2(t,s)$ is increasing in $t$ on $(0,+\infty)$. Therefore, by using (\ref{8-28-3}) and \eqref{8-28-3(2)} there exist $\lambda>0$, $r>0$ and $R>0$ with $r<R$ such that
		\begin{eqnarray}
		&&g_1(r,s)>0,~ g_1(R,s)<0,~~ \forall s\in (r,R],\label{8-28-4}\\
		&&g_2(t,r)>0,~ g_2(t,R)<0,~~ \forall t\in (r,R].\label{8-28-4(2)}
		\end{eqnarray}
		Now, on applying the Miranda's theorem \cite{M}, $g_1(t_u,s_u)=g_2(t_u,s_u)=0$, for some $t_u,s_u\in[r,R]$. This implies that $t_uu^++s_uu^-\in \mathcal{M}$.\\		
		We now prove the uniqueness. Assume there exists $(t_1, s_1)$ and
		$(t_2,s_2)$ such that $t_iu^++s_iu^-\in \mathcal{M}$, $i=1,2$. We prove the uniqueness by dividing into two cases.\\
		{\bf Case 1.} Let $u\in \mathcal{M}$.\\
		Without loss of generality, we assume $(t_1,s_1)=(1,1)$ and $t_2\leq s_2$. Now, for $u\in X_0$, we define
		\begin{align*}
			A^+(u)=&\int_{\Omega^+}\int_{\Omega^+}\frac{|u^+(x)-u^+(y)|^{p}}{|x-y|^{N+p\alpha}}dxdy+\int_{\Omega^+}\int_{\Omega^c}\frac{|u^+(x)|^{p}}{|x-y|^{N+p\alpha}}dxdy\nonumber\\
			&+\int_{\Omega^c}\int_{\Omega^+}\frac{|u^+(y)|^{p}}{|x-y|^{N+p\alpha
			}}dxdy+\int_{\Omega^+}\int_{\Omega^-}\frac{|u^+(x)-u^-(y)|^{p-1}u^+(x)}{|x-y|^{N+p\alpha }}dxdy\nonumber\\
			&+\int_{\Omega^-}\int_{\Omega^+}\frac{|u^-(x)-u^+(y)|^{p-1}u^+(y)}{|x-y|^{N+p\alpha}}dxdy
		\end{align*}
		and
		\begin{align*}
			A^-(u)=&\int_{\Omega^-}\int_{\Omega^-}\frac{|u^-(x)-u^-(y)|^{p}}{|x-y|^{N+p\alpha}}dxdy+\int_{\Omega^-}\int_{\Omega^c}\frac{|u^-(x)|^{p}}{|x-y|^{N+p\alpha}}dxdy\nonumber\\
			&+\int_{\Omega^c}\int_{\Omega^-}\frac{|-u^-(y)|^{p}}{|x-y|^{N+p\alpha}}dxdy+\int_{\Omega^+}\int_{\Omega^-}\frac{|u^+(x)-u^-(y)|^{p-1}(-u^-(y))}{|x-y|^{N+p\alpha }}dxdy\nonumber\\
			&+\int_{\Omega^-}\int_{\Omega^+}\frac{|u^-(x)-u^+(y)|^{p-1}(-u^-(x))}{|x-y|^{N+p\alpha}}dxdy.
		\end{align*}
		Since, $u\in\mathcal{M}$, therefore, by using $\langle\Phi'(u),u^+\rangle=\langle\Phi'(u),u^-\rangle=0$, we get
		\begin{eqnarray}
		&&A^+(u)=\int_{\Omega}f(x,u^+)u^+dx+\lambda\int_{\Omega} g(u^+)u^+ dx,\label{8-29-1}\\
		&&A^-(u)=\int_{\Omega}f(x,u^-)u^-dx+\lambda\int_{\Omega} g(u^-)u^- dx. \label{8-29-2}
		\end{eqnarray}
		Again by using $<\Phi'(t_2u^++s_2u^-), t_2u^+>=0=<\Phi'(t_2u^++s_2u^-),
		s_2u^->$ we have
		\begin{align}
		&t_2^p(A^+(u)+B_1^+(u)+B_2^+(u))=\int_{\Omega}f(x,t_2u^+)t_2u^+dx
		+\lambda\int_{\Omega} g(t_2u^+)t_2u^+ dx\label{8-29-3}\\
		&s_2^p(A^-(u)+B_1^-(u)+B_2^-(u))=\int_{\Omega}f(x,s_2u^-)s_2u^-dx+\lambda\int_{\Omega} g(t_2u^+)t_2u^+ dx\label{8-29-4}
		\end{align}
		where,
		\begin{align*}
		&B_1^+(u)=\int_{\Omega^+}\int_{\Omega^-}\frac{|u^+(x)-\frac{s_2}{t_2}u^-(y)|^{p-1}u^+(x)}{|x-y|^{N+p\alpha}}dxdy-\int_{\Omega^+}\int_{\Omega^-}\frac{|u^+(x)-u^-(y)|^{p-1}u^+(x)}{|x-y|^{N+p\alpha}}dxdy,\\
		&B_2^+(u)=\int_{\Omega^-}\int_{\Omega^+}\frac{|\frac{s_2}{t_2}u^-(x)-u^+(y)|^{p-1}u^+(y)}{|x-y|^{N+p\alpha}}dxdy-\int_{\Omega^-}\int_{\Omega^+}\frac{|u^-(x)-u^+(y)|^{p-1}u^+(y)}{|x-y|^{N+p\alpha}}dxdy,\\
		&B_1^-(u)=\int_{\Omega^+}\int_{\Omega^-}\frac{|\frac{t_2}{s_2}u^+(x)-u^-(y)|^{p-1}(-u^-(y))}{|x-y|^{N+p\alpha}}dxdy-\int_{\Omega^+}\int_{\Omega^-}\frac{|u^+(x)-u^-(y)|^{p-1}(-u^-(y))}{|x-y|^{N+p\alpha}}dxdy,\\
		&B_2^-(u)=\int_{\Omega^-}\int_{\Omega^+}\frac{|u^-(x)-\frac{t_2}{s_2}u^+(y)|^{p-1}(-u^-(x))}{|x-y|^{N+p\alpha}}dxdy-\int_{\Omega^-}\int_{\Omega^+}\frac{|u^-(x)-u^+(y)|^{p-1}(-u^-(x))}{|x-y|^{N+p\alpha}}dxdy.
		\end{align*}
		Furthermore, $t_2\leq s_2$, implies that $B_1^+(u), B_2^+(u)\ge0$.
		Hence, from (\ref{8-29-1}) and (\ref{8-29-3}), we have
	{\small	\begin{align}\label{3.13}
		&\int_{\Omega}\left(\frac{f(x,t_2u^+)}{|t_2u^+|^{p-2}t_2u^+}-\frac{f(x,u^+)}{|u^+|^{p-2}u^+}\right)|u^+|^p+\lambda\int_{\Omega}\left(\frac{g(t_2u^+)}{|t_2u^+|^{p-2}t_2u^+}-\frac{g(u^+)}{|u^+|^{p-2}u^+}\right)|u^+|^p\nonumber\\&=\int_{\Omega}I_1+\int_{\Omega}I_2\nonumber\\
		&\geq0
		\end{align}}
	where, $I_1=\left(\frac{f(x,t_2u^+)}{|t_2u^+|^{p-2}t_2u^+}-\frac{f(x,u^+)}{|u^+|^{p-2}u^+}\right)|u^+|^p$ and $I_2=\lambda\left(\frac{g(t_2u^+)}{|t_2u^+|^{p-2}t_2u^+}-\frac{g(u^+)}{|u^+|^{p-2}u^+}\right)|u^+|^p$.
	{\bf Claim.} $t_2\geq 1$.\\
	To prove our claim, we consider the following four possibilities.
	\begin{enumerate}[label=\bf\Roman*.]
		\item When $I_1>0$, $I_2>0$: Now, $I_1>0$ implies that $t_2\geq 1$ by $(f_5)$. Again, $I_2>0$ implies $t_2\leq 1$ by using $(g_1)$-$(g_2)$. Therefore, on combining both the cases, we get $t_2=1$.
		\item When $I_1>0$, $I_2<0$: Since, $I_2<0$, we have $\int_{\Omega}I_1+\int_{\Omega}I_2\leq\int_{\Omega}I_1$. Therefore, by $(f_5)$, $I_1>0$ implies $t_2\geq 1$ and similarly, $I_2<0$ implies $t_2\geq 1$ by $(g_1)$-$(g_2)$. Thus $t_2\geq 1$.
		\item  When $I_1<0$, $I_2>0$: Since, $I_1<0$, we may choose, $\lambda>0$ small enough such that $\int_{\Omega}I_1+\int_{\Omega}I_2\leq 0$, which is a contradiction to \eqref{3.13}.
		\item  When $I_1<0$, $I_2<0$: In this case, both $I_1<0$ and $I_2<0$, yield $\int_{\Omega}I_1+\int_{\Omega}I_2\leq 0$, which is a contradiction to \eqref{3.13}.
	\end{enumerate}
		Therefore, we can conclude that $t_2\geq 1$. Again, from $B_1^-(u), B_2^-(u)\le0$, we have by (\ref{8-29-2}) and
		(\ref{8-29-4}) that
		\begin{equation*}
		\int_{\Omega}[\frac{f(x,s_2u^-)}{|s_2u^-|^{p-2}s_2u^-}-\frac{f(x,u^-)}{|u^-|^{p-2}u^-}]|u^-|^pdx+\lambda\int_{\Omega}[\frac{g(s_2u^-)}{|s_2u^-|^{p-2}s_2u^-}-\frac{g(u^-)}{|u^-|^{p-2}u^-}]|u^-|^pdx\leq 0,
		\end{equation*}
		on proceeding as the above proof together with $(f_5)$, $(g_1)$-$(g_2)$, one can prove that $s_2\le1$. Hence, $t_2=s_2=1$.\\
		{\bf Case 2.} Let $u\notin \mathcal{M}$.\\
		Let	$v_1=t_1u^++s_1u^-$ and $v_2=t_2u^++s_2u^-$. Again, by using above arguments, it is easy to prove that $\frac{t_2}{t_1}=\frac{s_2}{s_1}=1$. Hence, 	$(t_1, s_1)=(t_2, s_2)$. This completes the proof.		
	\end{proof}

	\begin{lemma}\label{Lem-minimizer}
		Assume $(f_1)$-$(f_5)$ and $(g_1)$-$(g_2)$ holds. Then there exists $u\in\mathcal{M}$ such that $\Phi(u)=m_\alpha$, where, $m_\alpha\doteq\inf\limits_{u\in \mathcal{M}}{\Phi(u)}$.
	\end{lemma}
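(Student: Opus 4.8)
The plan is to obtain a minimizer of $\Phi$ on $\mathcal M$ by the direct method: take a minimizing sequence, extract a weak limit, and then use Lemma~\ref{Lem-unique} together with the fibering map $(t,s)\mapsto\Phi(tv^{+}+sv^{-})$ to recover an element of $\mathcal M$ realizing $m_\alpha$. First I would note $\mathcal M\neq\emptyset$, since by Lemma~\ref{Lem-unique} the projection of any $u\in X_0$ with $u^{\pm}\neq0$ lies in $\mathcal M$. Next I would check that $\Phi$ is coercive on $\mathcal M$, which in particular gives $m_\alpha>-\infty$. For $u\in\mathcal M$ one has $\langle\Phi'(u),u\rangle=0$, hence
\[
\Phi(u)=\Phi(u)-\tfrac1\mu\langle\Phi'(u),u\rangle=\Big(\tfrac1p-\tfrac1\mu\Big)\|u\|^{p}-\lambda\int_\Omega\Big(G(u)-\tfrac1\mu g(u)u\Big)dx-\int_\Omega\Big(F(x,u)-\tfrac1\mu f(x,u)u\Big)dx,
\]
and by $(f_3)$ the last integrand is nonpositive for $|u|\ge M_0$ and bounded otherwise, while the cutoff makes $\overline{g}$ bounded, so $G$ grows at most linearly and $|g(u)u|\le C|u|$. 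Therefore $\Phi(u)\ge(\tfrac1p-\tfrac1\mu)\|u\|^{p}-C\lambda\|u\|-C$, which is bounded below and tends to $+\infty$ as $\|u\|\to\infty$; so $m_\alpha\in\mathbb R$ and every minimizing sequence $\{u_n\}\subset\mathcal M$ is bounded in $X_0$.

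Passing to a subsequence, $u_n\rightharpoonup u$ in $X_0$, $u_n\to u$ in $L^{q}(\Omega)$ and $L^{p}(\Omega)$ and a.e. in $\Omega$. Since $|v^{\pm}(x)-v^{\pm}(y)|\le|v(x)-v(y)|$ pointwise, $\|u_n^{\pm}\|\le\|u_n\|$ stays bounded, and $\|u_n^{\pm}-u^{\pm}\|_{L^{q}}\le\|u_n-u\|_{L^{q}}\to0$; as any weak $X_0$-limit of $u_n^{\pm}$ must coincide with its $L^{q}$-limit, this forces $u_n^{\pm}\rightharpoonup u^{\pm}$ in $X_0$ and $u_n^{\pm}\to u^{\pm}$ in $L^{q}$, $L^{p}$. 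By Lemma~\ref{Lem-B1}(ii), $\int_\Omega|u_n^{\pm}|^{q}dx\ge\mu_2$ for every $n$, hence $\int_\Omega|u^{\pm}|^{q}dx\ge\mu_2>0$, and therefore $u^{\pm}\neq0$.

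The decisive step is to upgrade $u$ to an actual minimizer in $\mathcal M$. By Lemma~\ref{Lem-unique} there is a unique pair $(t_u,s_u)\in\mathbb R^{+}\times\mathbb R^{+}$ with $\overline u:=t_uu^{+}+s_uu^{-}\in\mathcal M$, so $\Phi(\overline u)\ge m_\alpha$; it remains to prove $\Phi(\overline u)\le m_\alpha$. For this I would first establish the auxiliary fact that for every $v\in\mathcal M$ the fibering map $\gamma_v(t,s)=\Phi(tv^{+}+sv^{-})$ attains its global maximum over $\mathbb R^{+}\times\mathbb R^{+}$ exactly at $(1,1)$: by $(f_3)$ the superlinear term of order $\mu>p$ dominates the (at most) $p$-homogeneous bound on $\|tv^{+}+sv^{-}\|^{p}$, so $\gamma_v(t,s)\to-\infty$ as $t+s\to\infty$, while the coercivity-type estimates of Lemma~\ref{Lem-unique} (cf. \eqref{8-28-3}) keep the maximum off the coordinate axes, so $\gamma_v$ attains its maximum at an interior critical point; at such a point $\langle\Phi'(tv^{+}+sv^{-}),tv^{+}\rangle=\langle\Phi'(tv^{+}+sv^{-}),sv^{-}\rangle=0$, i.e. $tv^{+}+sv^{-}\in\mathcal M$, and Lemma~\ref{Lem-unique} makes this point unique, hence equal to $(1,1)$ when $v\in\mathcal M$. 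Applying this to each $u_n$ gives $\Phi(t_uu_n^{+}+s_uu_n^{-})=\gamma_{u_n}(t_u,s_u)\le\gamma_{u_n}(1,1)=\Phi(u_n)$. Since $t_uu_n^{+}+s_uu_n^{-}\rightharpoonup\overline u$ in $X_0$ and strongly in $L^{q}$, $L^{p}$, weak lower semicontinuity of $\|\cdot\|^{p}$ and continuity of the lower order terms (subcritical because $\overline{g}$ is bounded and $q<p_{\alpha}^{*}$) yield
\[
\Phi(\overline u)\le\liminf_{n\to\infty}\Phi(t_uu_n^{+}+s_uu_n^{-})\le\liminf_{n\to\infty}\Phi(u_n)=m_\alpha .
\]
Hence $\Phi(\overline u)=m_\alpha$ with $\overline u\in\mathcal M$, and the lemma follows with $\overline u$ as the asserted minimizer.

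I expect the main obstacle to be the auxiliary maximality fact, specifically the claim that $\gamma_v$ is bounded above and coercive to $-\infty$ at infinity: this requires controlling the nonlocal cross terms $\int_{\Omega^{+}}\int_{\Omega^{-}}|tv^{+}(x)-sv^{-}(y)|^{p-1}tv^{+}(x)\,|x-y|^{-N-p\alpha}\,dxdy$ and the (bounded after the cutoff, but still delicate) contribution of $g$ against the superlinear growth of $f$, and it is exactly here that $(f_3)$, $(f_5)$, $(g_1)$--$(g_2)$ and the smallness of $\lambda$ are all used; that the only interior critical point of $\gamma_v$ is this maximum then follows from Lemma~\ref{Lem-unique}. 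A secondary, more routine point is the justification of $u_n^{\pm}\rightharpoonup u^{\pm}$ in $X_0$, which is not automatic for truncations but follows from the uniform bound $\|u_n^{\pm}\|\le\|u_n\|$ together with the strong $L^{q}$ convergence that pins down the limit.
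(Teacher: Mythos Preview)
Your argument is correct but takes a genuinely different route from the paper. Both start the same way---minimizing sequence $\{u_n\}$, boundedness, weak limit $u^*$ with $(u^*)^{\pm}\neq 0$ via Lemma~\ref{Lem-B1}, and projection $(t^*,s^*)$ onto $\mathcal M$ via Lemma~\ref{Lem-unique}---and diverge at the step $\Phi(t^*(u^*)^{+}+s^*(u^*)^{-})\le m_\alpha$. The paper does not use the fibering-maximum fact here; instead it first proves $t^*,s^*\le 1$ by combining Fatou's lemma (applied to the nonlocal quantities $A^{\pm}(u_n)$) with the sign analysis of $B_i^{\pm}$ from Lemma~\ref{Lem-unique}, and then exploits the monotonicity in $|\tau|$ of $\mathcal H(x,\tau)=f(x,\tau)\tau-pF(x,\tau)$ together with $\mathcal H_1(\tau)=g(\tau)\tau-pG(\tau)\le 0$ to run the chain $m_\alpha\le\Phi(t^*(u^*)^{+}+s^*(u^*)^{-})\le\tfrac1p\int_\Omega\mathcal H(x,u^*)\le\liminf_n\tfrac1p\int_\Omega\mathcal H(x,u_n)=m_\alpha$. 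Your route instead establishes that for each $v\in\mathcal M$ the map $(t,s)\mapsto\Phi(tv^{+}+sv^{-})$ is globally maximized at $(1,1)$, applies this to $u_n$ with the fixed pair $(t_u,s_u)$, and then uses weak lower semicontinuity of $\Phi$ along $t_u u_n^{+}+s_u u_n^{-}\rightharpoonup \overline u$. The auxiliary maximum fact you need is exactly the paper's Lemma~\ref{Lem-maximum}, which is proved there \emph{after} Lemma~\ref{Lem-minimizer}; so in effect you have reordered the two lemmas. The paper's ordering keeps the present lemma self-contained (Lemma~\ref{Lem-maximum} is only invoked later for the deformation argument), while your ordering is arguably cleaner once Lemma~\ref{Lem-maximum} is available and avoids tracking the signs of $B_i^{\pm}$ and $\mathcal H_1$. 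A minor further difference: you obtain boundedness by a direct coercivity estimate from $(f_3)$, whereas the paper argues by contradiction from $(f_4)$ via the normalized sequence $w_n=u_n/\|u_n\|$.
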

	\begin{proof}
		Clearly, by the above Lemma \ref{Lem-unique}, we have $\mathcal{M}\neq\emptyset$.	Consider a minimizing sequence $\{u_n\}\subset \mathcal{M}$ such that $\Phi(u_n)\to m_\alpha$ as $n\to+\infty.$\\
		{\bf Claim:} The sequence $\{u_n\}$ is uniformly bounded in $X_0$.\\
		{\it Proof.} We will prove by contradiction. Let us assume that $\|u_n\|\rightarrow\infty$. We set $w_n=\frac{u_n}{\|u_n\|}$. Clearly, $\|z_n\|=1$, and upto a subsequence, there exists $w_0\in X_0$ such that
		\begin{enumerate}[label=(\roman*)]
			\item $w_n\rightarrow w_0$ in $X_0$,
			\item $w_n\rightarrow w_0$ in $L^r(\Omega)$ for all $r\in[1, p_{\alpha}^*)$ and
			\item  $w_n(x)\rightarrow w_0(x)$ almost everywhere in $\Omega$.
		\end{enumerate}
		 We further claim that $w_0=0$. Suppose not, define $\Omega_1=\{x\in\Omega:w_0(x)\neq 0\}$, then by $(f_4)$ and Fatou's lemma, we get,
		\begin{align*}
		\frac{1}{p}-\frac{m_{\alpha}+o(1)}{\|u_n\|^p}&=\frac{1}{p}-\frac{\Phi(u_n)}{\|u_n\|^p}\\
		&=\int_{\Omega}\frac{F(x, u_n)}{u_n^p}w_n^pdx+\int_{\Omega}\frac{G(u_n)}{u_n^p}w_n^pdx\\ &\geq\int_{\Omega_1}\frac{F(x, u_n)}{u_n^p}w_n^pdx+\int_{\Omega_1}\frac{G(u_n)}{u_n^p}w_n^pdx\\
		&\geq\int_{\Omega_1}\frac{F(x, u_n)}{u_n^p}w_n^pdx\rightarrow\infty~\text{as}~n\rightarrow\infty.
		\end{align*}
		which is a contradiction. Thus, $w_0\equiv0$. Therefore, $\{u_n\}$ is uniformly bounded in $X_0$. Then there exists $u^*\in X_0$ such that
		\begin{eqnarray}
		&&u_n^{\pm}\rightharpoonup (u^*)^{\pm}~~\mbox{in}~~ X_0,\label{8-30-1}\\
		&&u_n^{\pm}\to (u^*)^{\pm}~~\mbox{in} ~ L^r(\Omega)~~\mbox{for}~~r\in[1,p_{\alpha}^*),\label{8-30-2}\\
		&&u_n(x)\to u^*(x)~~ a.e.~~ x\in\Omega.\label{8-30-3}
		\end{eqnarray}
		From Lemma \ref{Lem-B1}, we have $(u^*)^{\pm}\neq0$. In addition, under the assumptions	$(f_1)$-$(f_2)$ and $(g_1)$-$(g_2)$, by using the compact embedding of $X_0\hookrightarrow L^r(\Omega)$ for $r\in[1,p_{\alpha}^*)$ and by applying some standard arguments (see \cite{Willem}), we get that
		\begin{align}
		&\lim\limits_{n\to+\infty}\int_{\Omega}f(x,u_n^{\pm})u_n^{\pm}dx=\int_{\Omega}f(x,(u^*)^{\pm})(u^*)^{\pm}dx,\label{8-30-4}\\
		&\lim\limits_{n\to+\infty}\int_{\Omega}F(x,u_n^{\pm})dx=\int_{\Omega}F(x,(u^*)^{\pm})dx\label{8-30-5}\\
		&\lim\limits_{n\to+\infty}\int_{\Omega}g(u_n^{\pm})u_n^{\pm}dx=\int_{\Omega}g((u^*)^{\pm})(u^*)^{\pm}dx,\label{8-30-4(1)}\\
		&\lim\limits_{n\to+\infty}\int_{\Omega}G(u_n^{\pm})dx=\int_{\Omega}G((u^*)^{\pm})dx\label{8-30-4(2)}.
		\end{align}
		 From Lemma \ref{Lem-unique}, we have the existence of $t^*,s^*>0$ such that $t^*(u^*)^++s^*(u^*)^-\in \mathcal{M}$. This implies
		\begin{align}
		(t^*)^p[A^+(u^*)+B_1^+(u^*)&+B_2^+(u^*)]\nonumber\\=&\int_{\Omega}f(x,t^*(u^*)^{+})t^*(u^*)^{+}dx+\int_{\Omega}g(t^*(u^*)^{+})t^*(u^*)^{+}dx,\label{8-30-6}\\
		(s^*)^p[A^-(u^*)+B_1^-(u^*)&+B_2^-(u^*)]\nonumber\\=&\int_{\Omega}f(x,s^*(u^*)^{-})s^*(u^*)^{-}dx+\int_{\Omega}g(s^*(u^*)^{-})s^*(u^*)^{-}dx.\label{8-30-7}
		\end{align}	
		We now prove that $t^*, s^*\leq 1$.	Since, the minimizing sequence $\{u_n\}\subset\mathcal{M}$, we get $\langle\Phi'(u_n), u_n^{\pm}\rangle=0$, which implies that
		\begin{eqnarray}\label{8-30-8}
		A^{\pm}(u_n)=\int_{\Omega}f(x,u_n^{\pm})u_n^{\pm}dx+\int_{\Omega}g(u_n^{\pm})u_n^{\pm}dx.
		\end{eqnarray}
		Therefore, by using the above inequalities (\ref{8-30-1})-(\ref{8-30-6}) and Fatou's lemma, we obtain
		\begin{eqnarray}\label{8-30-9}
		A^{\pm}(u^*)\le\int_{\Omega}f(x,(u^*)^{\pm})(u^*)^{\pm}dx+\int_{\Omega}g((u^*)^{\pm})(u^*)^{\pm}dx.
		\end{eqnarray}
		Furthermore, without loss of generality, assume $t^*\le s^*$. Again from (\ref{8-30-7}) and (\ref{8-30-9}) and the fact $B_1^-(u^*), B_2^-(u^*)\le0$, we get
		\begin{align}\label{8-30-10}
		0&\le\int_{\Omega}[\frac{f(x,(u^*)^-)}{|(u^*)^-|^{p-2}(u^*)^-}-\frac{f(x,s^*(u^*)^-)}{|s^*(u^*)^-|^{p-2}s^*(u^*)^-}]|(u^*)^-|^pdx\nonumber\\&+\lambda\int_{\Omega}[\frac{g((u^*)^-)}{|(u^*)^-|^{p-2}(u^*)^-}-\frac{g(s^*(u^*)^-)}{|s^*(u^*)^-|^{p-2}s^*(u^*)^-}]|u^+|^pdx.
		\end{align}
		Now proceeding on similar arguments as in the proof of Lemma \ref{Lem-unique} and using (\ref{8-30-10}), one can easily obtain $s^*\leq 1$. Therefore, we have	$0<t^*\le s^*\le1$.\\
		Let us define,  $\mathcal{H}(x,\tau)=f(x,\tau)\tau-pF(x,\tau)$ and $\mathcal{H}_1(\tau)= g(\tau)\tau-pG(\tau)$. Then, from $(f_5)$, we have $\mathcal{H}(x,\tau)$ is increasing with respect to $\tau$ on $(0,+\infty)$, decreasing on $(-\infty, 0)$ and $\mathcal{H}(x,\tau)\geq 0$. Again by $(g_1)$-$(g_2)$, we have $\mathcal{H}_1(\tau)\leq 0$ for $\tau\in\mathbb{R}\setminus\{0\}$. Therefore, by the definition of $\Phi$ and Fatou's lemma, we get	
		\begin{align*}
			m_{\alpha}&\le \Phi(t^*(u^*)^++s^*(u^*)^-)\\
			&=\Phi(t^*(u^*)^++s^*(u^*)^-)-\frac{1}{p}\langle\Phi'(t^*(u^*)^++s^*(u^*)^-), t^*(u^*)^++s^*(u^*)^-\rangle\\
			&=\frac{1}{p}\int_{\Omega}\mathcal{H}(x,t^*(u^*)^++s^*(u^*)^-)dx+\frac{1}{p}\int_{\Omega}\mathcal{H}_1(t^*(u^*)^+ +s^*(u^*)^-)dx\\	&\leq\frac{1}{p}\int_{\Omega}\mathcal{H}(x,t^*(u^*)^++s^*(u^*)^-)dx
			\end{align*}
			\begin{align*}
			&=\frac{1}{p}[\int_{\Omega^+}\mathcal{H}(x,t^*(u^*)^+)dx+\int_{\Omega^-}\mathcal{H}(x,s^*(u^*)^-)dx]\\
			&\le\frac{1}{p}[\int_{\Omega^+}\mathcal{H}(x,(u^*)^+)dx+\int_{\Omega^-}\mathcal{H}(x,(u^*)^-)dx]\\
			&\le\liminf\limits_{n\to+\infty}\frac{1}{p}\int_{\Omega}\mathcal{H}(x,u_n)dx\\
			&=\lim\limits_{n\to+\infty}[\Phi(u_n)-\frac{1}{p}\langle\Phi'(u_n),u_n\rangle]\\
			&=m_{\alpha}.
		\end{align*}
		Thus, we conclude $t^*=s^*=1$ and hence $\Phi(u^*)=m_{\alpha}$. This completes the proof.		
	\end{proof}
	
	\begin{lemma}\label{Lem-maximum}
		Let $u\in \mathcal{M}$. Then for every $t, s\geq 0$ with $(t, s)\neq(1,1)$, we have
		\begin{equation*}
			\Phi(u)>\Phi(tu^++su^-).
		\end{equation*}
	\end{lemma}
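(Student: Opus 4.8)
The plan is to exploit the two equations defining $\mathcal{M}$ together with the monotonicity assumptions $(f_5)$, $(g_1)$–$(g_2)$, in the same spirit as the uniqueness argument of Lemma~\ref{Lem-unique}. Fix $u\in\mathcal{M}$, so $\langle\Phi'(u),u^+\rangle=\langle\Phi'(u),u^-\rangle=0$, which by \eqref{8-29-1}–\eqref{8-29-2} gives
\[
A^+(u)=\int_{\Omega}f(x,u^+)u^+dx+\lambda\int_{\Omega}g(u^+)u^+dx,\qquad
A^-(u)=\int_{\Omega}f(x,u^-)u^-dx+\lambda\int_{\Omega}g(u^-)u^-dx.
\]
For $t,s\ge 0$ I would write $\Phi(tu^++su^-)$ explicitly, separating the contributions of $u^+$ and $u^-$ and the cross terms over $\Omega^+\times\Omega^-$ and $\Omega^-\times\Omega^+$. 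Using homogeneity, the self-interaction pieces scale like $t^p$ (resp.\ $s^p$), while the cross terms are of the form $\int\int |tu^+(x)-su^-(y)|^p/|x-y|^{N+p\alpha}$, which for $t,s\ge 0$ is bounded above by $\max\{t,s\}^p$ times the same integral evaluated at $(1,1)$ when $t,s$ lie on a suitable curve; more simply, convexity of $r\mapsto |r|^p$ gives the estimate needed to control the cross terms by $t^p$ and $s^p$ contributions.

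The core of the argument is the elementary one-variable inequality: under $(f_5)$ and the definition $F$, for $\tau\ne 0$ the function
\[
\varphi(t)=\frac{t^p}{p}\,\frac{f(x,\tau)}{|\tau|^{p-2}\tau}\,\tau-F(x,t\tau)
\]
has a strict maximum at $t=1$ (this is exactly the standard ``fibering'' computation: $\varphi'(t)=t^{p-1}\big(f(x,\tau)\tau/|\tau|^{p-2}\tau\cdot\tau - f(x,t\tau)(t\tau)/ (t\tau)\cdot \dots\big)$, and $(f_5)$ makes $f(x,r)/|r|^{p-2}r$ strictly increasing so $\varphi'(t)>0$ for $t<1$, $<0$ for $t>1$). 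Likewise, from $\mathcal{H}_1(\tau)=g(\tau)\tau-pG(\tau)\le 0$ (established already in Lemma~\ref{Lem-minimizer} via $(g_1)$–$(g_2)$) one gets that the singular part only helps: $t\mapsto \frac{t^p}{p}g(\tau)\tau|_{\text{rescaled}} - G(t\tau)$ is handled the same way, with the correct sign. Assembling these per-point inequalities and integrating over $\Omega^+$ and $\Omega^-$, combined with $A^\pm(u)$ being the critical values, yields $\Phi(tu^++su^-)\le \Phi(u)$ for all $t,s\ge 0$, with equality forcing $t=s=1$ because the one-variable maxima are strict.

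The main obstacle is the nonlocal cross terms: because $\Phi$ does \emph{not} split as $\Phi(u^+)+\Phi(u^-)$, the quantity $\Phi(tu^++su^-)$ is not simply a sum of two independent one-variable functions of $t$ and $s$, and the cross integrals $\int_{\Omega^+}\int_{\Omega^-}|tu^+(x)-su^-(y)|^p/|x-y|^{N+p\alpha}$ are genuinely jointly dependent on $(t,s)$. To deal with this I would first reduce, as in Lemma~\ref{Lem-unique}, to the two cases $t\le s$ and $s\le t$ by symmetry, and in the case $t\le s$ use that $|tu^+(x)-su^-(y)|^{p-1}tu^+(x)\le s^{p-1}|u^+(x)-u^-(y)|^{p-1}\,s\,u^+(x)\cdot (t/s)$ type bounds (exploiting $u^-\le 0$ so $tu^+(x)-su^-(y)=tu^+(x)+s|u^-(y)|$ is increasing in both) to compare the cross terms at $(t,s)$ with $s^p$ times those at $(1,1)$; then the ``worst case'' cross-term contribution is absorbed into the $s^p$-coefficient, reducing everything to the scalar inequalities above. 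Once the cross terms are controlled, the rest is the routine fibering computation, and strictness of the maximum at $(1,1)$ follows from strict monotonicity in $(f_5)$ together with $u^\pm\not\equiv 0$.
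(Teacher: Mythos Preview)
Your approach is genuinely different from the paper's and, as written, has real gaps. The paper does \emph{not} attempt a direct computation of $\Phi(u)-\Phi(tu^++su^-)$. Instead it argues indirectly: set $I_u(t,s)=\Phi(tu^++su^-)$, use $(f_4)$ to get $I_u(t,s)\to-\infty$ as $|(t,s)|\to\infty$, so a global maximum $(t_0,s_0)\in[0,\infty)^2$ exists; rule out the boundary cases $t_0=0$ or $s_0=0$ by showing, via the quantities $\mathcal{H}(x,\tau)=f(x,\tau)\tau-pF(x,\tau)$ and $\mathcal{H}_1(\tau)=g(\tau)\tau-pG(\tau)\le 0$, that any boundary value is strictly below $I_u(1,1)$; finally, since $(t_0,s_0)$ is an interior maximum it is a critical point, and the uniqueness statement of Lemma~\ref{Lem-unique} forces $(t_0,s_0)=(1,1)$. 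Strictness for $(t,s)\neq(1,1)$ then follows because any other global maximizer would be a second interior critical point. This route completely avoids manipulating the cross integrals at general $(t,s)$.

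Your direct fibering route has two concrete problems. First, the singular part works \emph{against} you in the one--variable inequality: since $g$ is nonincreasing on $(0,\infty)$, the map $\tau\mapsto g(\tau)/|\tau|^{p-2}\tau$ is strictly decreasing (for the model $g(\tau)=\tau^{-\delta}$ it is $\tau^{-\delta-p+1}$), so for $t<1$ one has $g(tu^+)/(tu^+)^{p-1}>g(u^+)/(u^+)^{p-1}$. Hence in your $\varphi'(t)$ the $g$--contribution has the opposite sign to the $f$--contribution, and you cannot conclude $\varphi'(t)>0$ for $t<1$ without a further smallness-of-$\lambda$ argument along the lines of the four--case analysis in Lemma~\ref{Lem-unique}; the sentence ``the singular part only helps'' is simply false here. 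Second, the cross--term control is not right: for $t\le s$ your bound gives $|tu^+(x)-su^-(y)|^{p-1}tu^+(x)\le s^{p-1}t\,|u^+(x)-u^-(y)|^{p-1}u^+(x)$, a factor $s^{p-1}t$ rather than $s^p$, so ``absorbing into the $s^p$--coefficient'' does not go through as stated. More to the point, you need to control the full $p$--th power cross term in $\|tu^++su^-\|^p$ (which enters $\Phi$), not merely the $(p-1)$--power pairings that enter $\langle\Phi',\cdot\rangle$, and no clean two--sided bound of the type you sketch reduces $\Phi(tu^++su^-)$ to a sum of decoupled one--variable problems.

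The cleanest fix is to abandon the direct computation and follow the paper's global--maximum argument, which uses Lemma~\ref{Lem-unique} as a black box and never needs to estimate the cross integrals away from $(1,1)$.
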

	\begin{proof}
		For each $u\in X_0$ such that $u^{\pm}\neq0$, let us define $I_u:[0,+\infty)\times
		[0,+\infty)\to \mathbb{R}$ as
		\begin{equation*}
		I_u(t,s)= \Phi(tu^++su^-), ~~\forall\, t,s\ge0.
		\end{equation*}
		Observe that from $(f_4)$, we get
		\begin{equation*}
			\lim\limits_{|(t,s)|\to+\infty}I_u(t,s)=-\infty.
		\end{equation*}
		Therefore, $I_u$ admits a global maximum at some $(t_0,s_0)\in [0,+\infty)\times [0,+\infty)$. We now prove that $t_0>0, s_0>0$ by showing that the other three possibilities can not hold, which are as follow.
		\begin{description}
			\item[(i)]$t_0=s_0=0$;
			\item[(ii)]$t_0>0,s_0=0$;
			\item[(iii)]$t_0=0,s_0>0$.
		\end{description}
		Let $s_0=0$. Since, $I_u$ has a global maximum at $(t_0,s_0)$, then $\Phi(t_0u^+)\ge \Phi(tu^+)$ for every $t>0$,. Therefore, we have
		$\langle\Phi'(t_0u^+),t_0u^+\rangle=0$, which implies,
		\begin{equation}\label{8-30-11}
		t_0^p\|u^+\|^p=\int_{\Omega}f(x,t_0u^+)t_0u^+dx+\int_{\Omega}g(t_0u^+)t_0u^+dx.
		\end{equation}
		Again, since $u\in \mathcal{M}$, we get	$\langle\Phi'(u^+),u^+\rangle<0$, i.e.
		\begin{equation*}
			\|u^+\|^p<\int_{\Omega}f(x,u^+)u^+dx+\int_{\Omega}g(u^+)u^+dx.
		\end{equation*}
		Now, using this inequality and (\ref{8-30-11}), we get
		\begin{equation*}
			\int_{\Omega}\left[\frac{f(x,u^+)}{|u^+|^{p-2}u^+}-\frac{f(x,t_0u^+)}{|t_0u^+|^{p-2}t_0u^+}\right]|u^+|^pdx+\int_{\Omega}\left[\frac{g(u^+)}{|u^+|^{p-2}u^+}-\frac{g(t_0u^+)}{|t_0u^+|^{p-2}t_0u^+}\right]|u^+|^pdx>0.
		\end{equation*}
		Again, on repeating similar arguments as in Lemma \ref{Lem-unique}, together with $(g_1)$-$(g_2)$ and $(f_5)$, we get $t_0\leq 1$. Furthermore, $\mathcal{H}(x,\tau)\geq0, \forall (x,\tau)\in\overline{\Omega}\times\mathbb{R}$ and $\mathcal{H}_1(\tau)\leq0, \forall\,\tau\in\mathbb{R}\setminus\{0\}$. In addition, $\mathcal{H}(x,\tau)$ is increasing on	$(0,+\infty)$ and decreasing on $(-\infty, 0)$ with respect to $\tau$. Therefore, we have
		\begin{align*}
			I_u(t_0,0)&=\Phi(t_0u^+)\\
			&=\Phi(t_0u^+)-\frac{1}{p}\langle\Phi'(t_0u^+), t_0u^+\rangle\\
			&=\frac{1}{p}\int_{\Omega}\mathcal{H}(x,t_0u^+)dx+\frac{1}{p}\int_{\Omega}\mathcal{H}_1(t_0u^+)dx\\
			&\leq\frac{1}{p}\int_{\Omega}\mathcal{H}(x,t_0u^+)dx\\
			&\leq\frac{1}{p}\int_{\Omega^+}\mathcal{H}(x,u^+)dx\\
			&<\frac{1}{p}\left[\int_{\Omega^+}\mathcal{H}(x,u^+)dx+\int_{\Omega^-}\mathcal{H}(x,u^-)dx\right]\\
			&=\Phi(u)-\frac{1}{p}\langle\Phi'(u),u\rangle\\
			&=\Phi(u)=I_u(1,1).
		\end{align*}
		This contradicts that $I_u$ has a global maximum at $(t_0,s_0)$. Hence, $s_0>0$. Similarly, we can prove that $t_0>0$. Finally, Lemma \ref{Lem-unique}, guarantees that $(1,1)$ is the unique critical point of $I_u$ in	$(0,+\infty)\times(0,+\infty)$. This readily implies that, if $t_0,s_0\in(0,1]$ such that $(t_0,s_0)\neq (1,1)$, then we have
		\begin{equation*}
			I_u(t_0,s_0)<I_u(1,1).
		\end{equation*}
		This completes the proof.
	\end{proof}
		\noindent The following lemma concludes the existence of a critical point of $\Phi$, which is a least energy solution to our problem.	
	\begin{lemma}\label{Lem-critical-point}
		Let there exists $u^*\in \mathcal{M}$ such that $\Phi(u^*)=m_{\alpha}$. Then $u^*$ is a critical point of $\Phi$, i.e.,
		$\Phi'(u^*)=0$.
	\end{lemma}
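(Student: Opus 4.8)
I would prove this by the now-standard contradiction argument of Nehari--deformation type: assuming $\Phi'(u^*)\neq 0$, push $u^*$ slightly downhill with the quantitative deformation lemma, and then use Brouwer degree to recover, along the deformed two-parameter family $t(u^*)^++s(u^*)^-$, an element of $\mathcal M$ with energy strictly below $m_\alpha$, contradicting the definition of $m_\alpha$.

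First I would fix the geometry near $u^*$. Since $\Phi\in C^1(X_0)$ and $\Phi'(u^*)\neq 0$, continuity gives $\delta_0,\vartheta>0$ with $\|\Phi'(v)\|\geq\vartheta$ for $\|v-u^*\|\leq 3\delta_0$; and since the truncation $v\mapsto v^{\pm}$ is continuous on $X_0$ with $(u^*)^{\pm}\neq 0$, there is $\rho>0$ with $v^{\pm}\neq 0$ whenever $\|v-u^*\|<\rho$. By Lemma \ref{Lem-unique}, keeping the functions $g_1,g_2$ and the numbers $r<1<R$ from its proof, set $D=(r,R)\times(r,R)$ and $\gamma(t,s)=t(u^*)^++s(u^*)^-$ on $\overline D$; by Lemma \ref{Lem-maximum}, $\Phi(\gamma(t,s))<\Phi(u^*)=m_\alpha$ for $(t,s)\neq(1,1)$, so $\bar m:=\max_{\partial D}\Phi\circ\gamma<m_\alpha$. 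Now I would choose $\delta>0$ small enough that $3\delta\le 3\delta_0$ and $4\delta<\rho$, put $S=B(u^*,\delta)$, and $\varepsilon=\min\{(m_\alpha-\bar m)/2,\ \vartheta\delta/8\}>0$; then $\|\Phi'(\cdot)\|\geq 8\varepsilon/\delta$ on $\Phi^{-1}([m_\alpha-2\varepsilon,m_\alpha+2\varepsilon])\cap S_{2\delta}$ (note $S_{2\delta}=\overline B(u^*,3\delta)$), so Lemma 2.3 of \cite{Willem} yields $\eta\in C([0,1]\times X_0,X_0)$ that is the identity off $\Phi^{-1}([m_\alpha-2\varepsilon,m_\alpha+2\varepsilon])\cap S_{2\delta}$, satisfies $\Phi(\eta(1,\cdot))\leq\Phi(\cdot)$ and $\|\eta(1,\cdot)-\cdot\|\leq\delta$, and maps $\Phi^{m_\alpha+\varepsilon}\cap S$ into $\Phi^{m_\alpha-\varepsilon}$.

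Next, with $h:=\eta(1,\gamma(\cdot,\cdot))$, I would record two facts. First, $\Phi(h(t,s))<m_\alpha$ for all $(t,s)\in\overline D$: at $(1,1)$ because $\gamma(1,1)=u^*\in S$ with $\Phi(u^*)=m_\alpha\le m_\alpha+\varepsilon$; elsewhere because $\Phi\circ\gamma<m_\alpha$ and $\Phi(\eta(1,\cdot))\le\Phi(\cdot)$. Second, on $\partial D$ one has $\Phi\circ\gamma\le\bar m<m_\alpha-2\varepsilon$, hence $h=\gamma$ there. Consequently the continuous field
$$\Psi(t,s)=\big(\langle\Phi'(h(t,s)),h(t,s)^{+}\rangle,\ \langle\Phi'(h(t,s)),h(t,s)^{-}\rangle\big)$$
agrees on $\partial D$ with $(g_1,g_2)$, and the sign conditions on $\partial D$ used to invoke Miranda's theorem in Lemma \ref{Lem-unique} make $(g_1,g_2)$ homotopic on $\partial D$, without vanishing, to $(t,s)\mapsto(\frac{r+R}{2}-t,\frac{r+R}{2}-s)$, so $\deg((g_1,g_2),D,0)=1$ and therefore $\deg(\Psi,D,0)=1$. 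Hence $\Psi(t_0,s_0)=0$ for some $(t_0,s_0)\in D$.

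It remains to verify $w:=h(t_0,s_0)\in\mathcal M$, which forces the contradiction $m_\alpha\le\Phi(w)<m_\alpha$. If $\gamma(t_0,s_0)\notin\overline B(u^*,3\delta)=S_{2\delta}$, then $h(t_0,s_0)=\gamma(t_0,s_0)$, so $\Psi(t_0,s_0)=(g_1,g_2)(t_0,s_0)=0$ with $t_0,s_0>0$ would place $t_0(u^*)^++s_0(u^*)^-$ in $\mathcal M$ with $(t_0,s_0)\neq(1,1)$ (as $(1,1)$ lies in $\overline B(u^*,3\delta)$), contradicting the uniqueness in Lemma \ref{Lem-unique}. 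Hence $\gamma(t_0,s_0)\in\overline B(u^*,3\delta)$, so $\|w-u^*\|\le\|w-\gamma(t_0,s_0)\|+3\delta\le 4\delta<\rho$, giving $w^{\pm}\neq 0$; together with $\langle\Phi'(w),w^{\pm}\rangle=0$ this yields $w\in\mathcal M$. I expect the genuinely delicate point to be exactly this last step — ensuring the small deformation does not destroy the nontriviality of $w^{+}$ or $w^{-}$, which is why the radii $\delta_0,\rho,\delta$ must be coordinated before $\varepsilon$ is selected; the degree computation and the application of the deformation lemma are then routine.
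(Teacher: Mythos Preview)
Your proof follows the same overall scheme as the paper's --- contradiction via the quantitative deformation lemma plus Brouwer degree --- but differs in two implementation choices worth noting. First, the paper takes $D$ to be a small box $(1-\delta_1,1+\delta_1)^2$ around $(1,1)$ and computes $\deg(J_1,D,0)=1$ by showing $\det J_1'(1,1)>0$; this uses second derivatives of $\Phi$ (hence implicitly $f,g\in C^1$, which is not among the stated hypotheses). You instead take the large box $D=(r,R)^2$ inherited from the Miranda step of Lemma~\ref{Lem-unique} and read off the degree from the sign pattern of $(g_1,g_2)$ on $\partial D$ via a linear homotopy; this is more elementary and needs only $\Phi\in C^1$. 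Second, your verification that $w^{\pm}\neq 0$ via the explicit displacement bound $\|\eta(1,\cdot)-\cdot\|\le\delta$ from property~(iv) of Willem's Lemma~2.3, combined with the uniqueness dichotomy, is more careful than the paper's brief appeal to ``conditions (i)--(ii)''. One minor slip: with $\varepsilon=\min\{(m_\alpha-\bar m)/2,\vartheta\delta/8\}$ you only obtain $\bar m\le m_\alpha-2\varepsilon$, not the strict inequality you invoke to conclude $h=\gamma$ on $\partial D$; replacing $1/2$ by $1/3$ (or shrinking $\delta$ further) fixes this immediately.
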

	\begin{proof}
		We will prove by method of contradiction. Let $\Phi'(u^*)\neq0$, then there exist	$\rho_1, \mu_1>0$ such that
		\begin{equation*}
			\|\Phi'(u)\|\ge \rho_1, ~~\forall\, B_{3\mu_1}(u^*),
		\end{equation*}
		where $B_{3\mu_1}(u^*)=\{u\in X_0: \|u-u^*\|\le 3\mu_1\}$ a closed ball of radius $3\mu$ in $X_0$ centered at $u^*$. Now, $u^*\in\mathcal{M}$ implies that $(u^*)^{\pm}\neq0$, then we can choose a sufficiently small $\mu_1>0$ such that $u^{\pm}\neq0$ for all $u\in B_{3\mu_1}(u^*)$. For sufficiently small $\delta_1\in(0,\frac{1}{2})$, let us define, $D=(1-\delta_1, 1+\delta_1)\times(1-\delta_1, 1+\delta_1)$ such that $t(u^*)^++s(u^*)^-\in B_{3\mu_1}(u^*)$ for all $(t,s)\in\overline{D}$. From Lemma \ref{Lem-maximum}, one can say
		\begin{equation}\label{8-31-5}
		\tilde{m}_{\alpha}\doteq \max\limits_{(t,s)\in \partial
			D}\Phi(t(u^*)^++s(u^*)^-)<m_{\alpha}.
		\end{equation}
		Choose, $\epsilon_1\doteq \min\{\frac{m_{\alpha}-\tilde{m}_{\alpha}}{2},
		\frac{\rho_1\mu_1}{8}\}$. Therefore, from the quantitative deformation lemma (see Lemma 2.3 of \cite{Willem}) it follows that there exists a continuous map $\eta: \mathbb{R}\times X_0\to X_0$ such that
		\begin{description}
			\item[(i)]$\eta(1,u)=u$ if $u\not\in\Phi^{-1}[m_{\alpha}-2\epsilon_1, m_{\alpha}+2\epsilon_1]\cap B_{2\mu_1}(u^*)$;
			\item[(ii)]$\eta(1,\Phi^{m_{\alpha}+\epsilon_1}\cap B_{\mu_1}(u^*))\subset
			\Phi^{m_{\alpha}-\epsilon_1}$;
			\item[(iii)]$\Phi(\eta(1,u))\le \Phi(u), \forall u\in X_0$.
		\end{description}
		We define, $\sigma(t,s)\doteq\eta(1,t(u^*)^++s(u^*)^-), \forall\, (t,s)\in\overline{D}$. Thus from the Lemma \ref{Lem-maximum} together with (ii)-(iii) of the deformation lemma, we get
		\begin{equation}\label{8-30-12}
		\max\limits_{(t,s)\in\overline{D}}\Phi(\eta(1,t(u^*)^++s(u^*)^-))
		<m_{\alpha},
		\end{equation}
		which implies that $\{\sigma(t,s)\}_{(t,s)\in\overline{D}}\cap
		\mathcal{M}=\emptyset$. Again, we will prove by the following argument that $\{\sigma(t,s)\}_{(t,s)\in\overline{D}}\cap\mathcal{M}\neq \emptyset$ to arrive at a contradiction. Now, for $(t,s)\in \overline{D}$, we define
		\begin{align*}
			&J_1(t,s)=(\langle\Phi'(t(u^*)^++s(u^*)^-), (u^*)^+\rangle,
			\langle\Phi'(t(u^*)^++s(u^*)^-), (u^*)^-\rangle),\\
			&J_2(t,s)=(\frac{1}{t}\langle\Phi'(\sigma(t,s)), \sigma^+(t,s)\rangle, \frac{1}{s}\langle\Phi'(\sigma(t,s)), \sigma^-(t,s)\rangle).
		\end{align*}
		Since $f, g\in C^1$, the functional $J_1$ is $C^1$. Therefore, from $\langle\Phi'(u^*), (u^*)^{\pm}\rangle=0$, we get
		\begin{align*}
			&\int_{Q}\frac{|u^*(x)-u^*(y)|^{p-2}(u^*(x)-u^*(y))((u^*)^+(x)-(u^*)^+(y))}{|x-y|^{N+p\alpha}}dxdy\\&\hspace{7cm}=\int_{\Omega}f(x,(u^*)^+)(u^*)^+dx+\int_{\Omega}g((u^*)^+)(u^*)^+dx
		\end{align*}
		
		\begin{align*}
			&\int_{Q}\frac{|u^*(x)-u^*(y)|^{p-2}(u^*(x)-u^*(y))((u^*)^-(x)-(u^*)^-(y))}{|x-y|^{N+p\alpha}}dxdy=\\&\hspace{7cm}\int_{\Omega}f(x,(u^*)^-)(u^*)^-dx+\int_{\Omega}g((u^*)^-)(u^*)^-dx.~~~~~~~~~
		\end{align*}
		From $(g_1)$-$(g_2)$ and $(f_5)$, we have
		$\mathcal{H}'(x,\tau)\tau=f'(x,\tau)\tau^2-(p-1)f(x,\tau)\tau>0$ for all $\tau\in\mathbb{R}\setminus\{0\}$. We denote
		\begin{align*}
			&\alpha_1=\int_{Q}\frac{|u^*(x)-u^*(y)|^{p-2}|(u^*)^+(x)-(u^*)^+(y)|^2}{|x-y|^{N+p\alpha}}dxdy,\\
			&\alpha_2=\int_{\Omega}f'_u(x,(u^*)^+)|(u^*)^+|^2dx+\int_{\Omega}g'_u((u^*)^+)|(u^*)^+|^2dx,\\& \alpha_3=\int_{\Omega}f(x,(u^*)^+)(u^*)^+dx+\int_{\Omega}g((u^*)^+)(u^*)^+dx\\
			&\beta_1=\int_{Q}\frac{|u^*(x)-u^*(y)|^{p-2}|(u^*)^-(x)-(u^*)^-(y)|^2}{|x-y|^{N+p\alpha}}dxdy,\\
			&\beta_2=\int_{\Omega}f'_u(x,(u^*)^-)|(u^*)^-|^2dx+\int_{\Omega}g'_u((u^*)^-)|(u^*)^-|^2dx,\\& \beta_3=\int_{\Omega}f(x,(u^*)^-)(u^*)^-dx+\int_{\Omega}g((u^*)^-)(u^*)^-dx,\\
			&\gamma_1=\int_{Q}\frac{|u^*(x)-u^*(y)|^{p-2}((u^*)^-(x)-(u^*)^-(y))((u^*)^+(x)-(u^*)^+(y))}{|x-y|^{N+p\alpha}}dxdy,\\
			&\gamma_2=\int_{Q}\frac{|u^*(x)-u^*(y)|^{p-2}((u^*)^+(x)-(u^*)^+(y))((u^*)^-(x)-(u^*)^-(y))}{|x-y|^{N+p\alpha}}dxdy.
		\end{align*}
		It is easy to observe that
		\begin{align*}
			&\alpha_1>0,~~ \alpha_2>(p-1)\alpha_3>0,\\
			&\beta_1>0,~~ \beta_2>(p-1)\beta_3>0,\\
			&\gamma_1=\int_{Q}\frac{|u^*(x)-u^*(y)|^{p-2}(-(u^*)^-(x)(u^*)^+(y)-(u^*)^-(y)(u^*)^+(x))}{|x-y|^{N+p\alpha}}dxdy=\gamma_2>0,\\
			&\alpha_1+\gamma_1=\alpha_3,~~ \beta_1+\gamma_2=\beta_3.
		\end{align*}
		Hence, we get
		\begin{align*}
			&\det(J'_1(1,1))\\
			&=\langle\Phi''(u^*)(u^*)^+, (u^*)^+\rangle\cdot
			\langle\Phi''(u^*)(u^*)^-, (u^*)^-\rangle\\
			&-\langle\Phi''(u^*)(u^*)^+,
			(u^*)^-\rangle\cdot \langle\Phi''(u^*)(u^*)^-,  (u^*)^+\rangle\\
			&=[(p-1)\alpha_1-\alpha_2]\cdot [(p-1)\beta_1-\beta_2]-(p-1)^2\gamma_1\cdot \gamma_2\\
			&>(p-1)^2\gamma_1\cdot \gamma_2-(p-1)^2\gamma_1\cdot \gamma_2=0.
		\end{align*}
		Therefore, by using the Brouwer degree theory, we get $\deg(J_1,D,0)=1$. Again, from (\ref{8-30-12}), we have $\sigma(t,s)=t(u^*)^++s(u^*)^-,	\forall\, (t,s)\in\partial D$. Hence,
		\begin{equation*}
			\deg(J_2,D,0)=\deg(J_1,D,0)=1.
		\end{equation*}
		Therefore, there exists $(t_0,s_0)\in D$ such that $J_2(t_0,s_0)=0$. On using the conditions (i)-(ii) in the deformation lemma, one can obtain that
		$$u_0\doteq\sigma(t_0,s_0)=\eta(1,t_0(u^*)^++s_0(u^*)^-)\in
		B_{3\mu_1}(u^*).$$ Therefore, we can say $\langle\Phi'(u_0), u_0^+\rangle=\langle\Phi'(u_0), u_0^-\rangle=0$ such that $u_0^{\pm}\neq0$, that is, $u_0\in \{\eta(t,s)\}_{(t,s)\in\overline{D}}\cap\mathcal{M}$.  Hence, we have a contradiction. Thus we conclude that $u^*$ is a critical point of $\Phi$ and a least energy sign-changing solution of problem corresponding to $\Phi$. Finally, since the critical points of $\Phi$ are also critical points of $I_{\lambda}$, we have $u^*$ is a critical point of $I_{\lambda}$. Hence $u^*$ is a sign-changing solution to the problem (\ref{1.1}).		
	\end{proof}	
	
	\section*{Acknowledgement}
	The author S. Ghosh, thanks the Council of Scientific and Industrial Research (C.S.I.R), India, for the financial assistantship received to carry out this research work.

\end{document}